\DeclareMathOperator{\ch}{char}
\DeclareMathOperator{\gr}{gr}
\DeclareMathOperator{\ad}{ad}
\DeclareMathOperator{\Der}{Der}
\DeclareMathOperator{\Lie}{Lie}
\DeclareMathOperator{\Alg}{Alg}
\DeclareMathOperator{\Jord}{Jord}
\DeclareMathOperator{\Poisson}{Poisson}
\DeclareMathOperator{\End}{End}
\newcommand{\M}{\mathrm M}      
\renewcommand {\limsup}{\operatorname* {\overline{lim}}}
\renewcommand {\liminf}{\operatorname* {\underline{lim}}}
\DeclareMathOperator{\GKdim}{GKdim}
\DeclareMathOperator{\LGKdim}{\underline{GKdim}}
\newcommand\dd{\partial}
\renewcommand{\a}{\alpha}
\newcommand{\Z}{\mathbb Z}            
\newcommand{\R}{\mathbb R}            
\newcommand{\N}{\mathbb N}            
\newcommand{\F}{\mathbb F}            
\newcommand\NO{\mathbb N_0}           
\renewcommand{\H}{\mathcal H}         
\newcommand{\QQ}{\mathbf Q}        
\newcommand{\RR}{\mathbf R}        
\newcommand{\PP}{\mathbf P}        
\newcommand{\JJ}{\mathbf J}        
\newcommand{\KK}{\mathbf K}        
\renewcommand{\AA}{\mathbf A}      
\newcommand{\Kan}{\mathcal Kan}      
\newcommand{\Jor}{\mathcal Jor}      
\newtheorem{Theorem}{Theorem}[section]
\newtheorem{Corollary}[Theorem]{Corollary}
\newtheorem{Lemma}[Theorem]{Lemma}
\theoremstyle{remark}
\newtheorem{Remark}{Remark}
\theoremstyle{Example}
\theoremstyle{Conjecture}
\newtheorem{Conjecture}{Conjecture}
\renewcommand{\theenumi}{\roman{enumi}}   
\begin{document}
\title{On Jordan doubles of slow growth of Lie superalgebras}
\author{Victor Petrogradsky}
\address{Department of Mathematics, University of Brasilia, 70910-900 Brasilia DF, Brazil}
\email{petrogradsky@rambler.ru}
\thanks{The first author was partially supported by grants FAPESP~2016/18068-9, CNPq~309542/2016-2}
\author{I.P.~Shestakov}
\address{Instituto de Mathem\'atica e Estat\'istica,
Universidade de Sa\~o Paulo, Caixa postal 66281, 05315-970, Sa\~o Paulo, Brazil}
\email{shestak@ime.usp.br}
\thanks{The second author was partially supported by grants FAPESP 2014/09310-5, CNPq 303916/2014-1} 
\subjclass[2000]{
16P90, 
16N40, 
16S32, 
17B65, 
17B66, 
17B70, 
17A70, 
17B63, 
17C10, 
17C50, 
17C70  
}
\keywords{
growth, self-similar algebras, nil-algebras, graded algebras,
Lie superalgebra, Poisson superalgebras, Jordan superalgebras, wreath product}

\begin{abstract}
To an arbitrary Lie superalgebra $L$ we associate its Jordan double ${\mathcal Jor}(L)$,
which is a Jordan superalgebra. This notion was introduced by the second author before~\cite{She99}.
Now we study further applications of this construction.
\par
First, we show that the Gelfand-Kirillov dimension of a Jordan superalgebra can be an arbitrary number $\{0\}\cup [1,+\infty]$.
Thus, unlike the associative and Jordan algebras~\cite{KraLen,MaZe96},
one hasn't an analogue of Bergman's gap $(1,2)$ for the Gelfand-Kirillov dimension of Jordan superalgebras.
\par
Second, using the Lie superalgebra $\mathbf R$ of~\cite{PeOtto},
we construct a Jordan superalgebra $\mathbf J={\mathcal Jor}({\mathbf R})$ that is nil finely $\mathbb Z^3$-graded
(moreover, the components are at most one-dimensional), the field being of characteristic not 2.
This example is in contrast with non-existence of such examples
(roughly speaking, analogues of the Grigorchuk and Gupta-Sidki groups)
of Lie algebras in characteristic zero~\cite{MaZe99} and
Jordan algebras in characteristic not~2~\cite{Zelmanov}.
Also, $\mathbf J$ is just infinite but not hereditary just infinite.
A similar Jordan superalgebra of slow polynomial growth was constructed before~\cite{PeSh18FracPJ}.
The virtue of the present example is that it is of linear growth,
of finite width~4, namely, its $\mathbb N$-gradation by degree in the generators has components of dimensions $\{0,2,3,4\}$,
and the sequence of these dimensions is non-periodic.
\par
Third, we review constructions of Poisson and Jordan superalgebras of~\cite{PeSh18FracPJ}
starting with another example of a Lie superalgebra introduced in~\cite{Pe16}.
We discuss the notion of self-similarity for Lie, associative, Poisson, and Jordan superalgebras.
We also suggest the notion of a wreath product in case of Jordan superalgebras.
\end{abstract}
\maketitle

\section{Introduction: Superalgebras, constructions}

Denote $\NO=\{0,1,2,\dots\}$.
By $K$ denote the ground field of characteristic $\ch K\ne 2$, $\langle S\rangle_K$ a linear span of a subset $S$ in a $K$-vector space.

\subsection{Associative and Lie superalgebras}
Superalgebras appear naturally in physics and mathematics~\cite{Kac77,Scheunert,BMPZ}.
Let $\Z_2=\{\bar 0,\bar 1\}$, the group of order 2.
A {\em superalgebra} $A$ is a $\Z_2$-graded algebra $A=A_{\bar 0}\oplus A_{\bar 1}$.
The elements $a\in A_\alpha$ are called {\em homogeneous of degree} $|a|=\alpha\in\Z_2$.
The elements of $A_{\bar 0}$ are {\em even}, those of $A_{\bar 1}$ {\em odd}.
In what follows, if $|a|$ enters an expression,
then it is assumed that $a$ is homogeneous of degree $|a|\in\Z_2$,
and the expression extends to the other elements by linearity.
Let $A,B$ be superalgebras, a {\em tensor product} $A\otimes B$ is a superalgebra
whose space is the tensor product of the spaces $A$ and $B$ with the induced $\Z_2$-grading and the product satisfying Kaplansky's rule:
$$
(a_1\otimes b_1) (a_2\otimes b_2)=(-1)^{|b_1|\cdot |a_2|}a_1a_2\otimes b_1b_2,\quad a_i\in A,\ b_i\in B.
$$

An {\em associative superalgebra} $A$ is just a
$\Z_2$-graded associative algebra $A=A_{\bar 0}\oplus A_{\bar 1}$.
Let $V=V_{\bar 0}\oplus V_{\bar 1}$ be a $\Z_2$-graded vector space.
Then $\End(V)$ is an associative superalgebra, where $\End(V)_{a}=\{\phi\in\End (V)| \phi(V_{b})\subset V_{a+b},b\in\Z_2\}$, $a\in \Z_2$.
In case $\dim V_{\bar 0}=m$, $\dim V_{\bar 1}=k$ this superalgebra is denoted by $\M(m|k)$.
One has an isomorphism of superalgebras $\M(a|b)\otimes \M(c|d)\cong \M(ac+bd|ad+bc)$ for all $a,b,c,d\ge 0$.

A {\em Lie superalgebra} is a $\Z_2$-graded algebra $L=L_{\bar 0}\oplus L_{\bar 1}$ with an
operation $[\ ,\ ]$ satisfying the axioms ($\ch K\ne 2,3$):
\begin{itemize}
\item
$[x,y]=-(-1)^{|x|\cdot |y| }[y,x]$,\qquad\qquad (super-anticommutativity);
\item
$[x,[y,z]]=[[x,y],z]+(-1)^{|x|\cdot| y|}[y,[x,z]]$,\qquad (super Jacobi identity).
\end{itemize}

\subsection{Poisson superalgebras}\label{SSPoisson}
A $\Z_2$-graded vector space $A=A_{\bar 0}\oplus A_{\bar 1}$ is called a {\it Poisson superalgebra}
provided that, beside the addition, $A$ has two $K$-bilinear operations as follows:
\begin{itemize}
\item
$A=A_{\bar 0}\oplus A_{\bar 1}$ is an associative superalgebra with unit whose multiplication is denoted by
$a\cdot b$ (or $ab$), where $a, b\in A$.
We assume that $A$ is {\it supercommutative}, i.e. $a\cdot b=(-1)^{|a| |b|}b\cdot a$,
for all $a,b\in A$.
\item
$A=A_{\bar 0}\oplus A_{\bar 1}$
is a Lie superalgebra whose product is traditionally denoted by the {\it Poisson bracket}
$\{a, b\}$, where $a, b\in A$.
\item these two operations are related by the {\it super Leibnitz rule}:
\begin{equation*}
\{a\cdot b, c\}=a\cdot\{b, c\}+(-1)^{|b|\cdot |c|}\{a, c\}\cdot b,\qquad  a, b, c \in A.
\end{equation*}
\end{itemize}
Let $L$ be a Lie superalgebra,
$\{U_n| n\ge 0\}$ the natural filtration of its universal enveloping algebra $U(L)$ by degree in $L$.
Consider the {\it symmetric algebra}
$S(L)=\gr U(L)=\mathop{\oplus}\limits_{n=0}^\infty U_{n}/U_{n+1}$ (see~\cite{Dixmier}).
Recall that $S(L)$ is identified with the supercommutative superalgebra $K[v_i\,|\, i\in I]\otimes \Lambda (w_j,|\,j\in J)$, where
$\{v_i\,|\, i\in I\}$, $\{w_j\,|\, j\in J\}$, are bases of $L_{\bar 0}$, $L_{\bar 1}$, respectively.
Define a Poisson bracket by setting $\{v,w\}=[v,w]$, where $v,w\in L$,
and extending to the whole of $S(L)$ by linearity and the Leibnitz rule.
Then, $S(L)$ is turned into a Poisson superalgebra, called the {\it symmetric algebra} of $L$.
Let $L(X)$ be the free Lie superalgebra generated by a graded set $X$,
then $S(L(X))$ is a free Poisson superalgebra~\cite{Shestakov93}.

Let us consider one more example.
Let $H_{n}=\Lambda(x_1,\dots,x_n,y_1,\dots,y_n)$
be the Grassmann superalgebra supplied with a bracket determined by:
$\{x_i,y_j\}=\delta_{i,j}$, $\{x_i,x_j\}=\{y_i,y_j\}=0$ for $1\le i,j\le n$.
We obtain the simple {\it Hamiltonian Poisson superalgebra} with the bracket:
\begin{equation}\label{poissonHn}
\{f,g\}=(-1)^{|f|-1}\sum_{i=1}^n
\bigg(\frac{\partial f}{\partial x_i}\frac{\partial g}{\partial y_i}
+\frac{\partial f}{\partial y_i}\frac{\partial g}{\partial x_i}\bigg),\qquad f,g\in H_{n}.
\end{equation}
Let $A$, $P$ be Poisson superalgebras, their tensor product $A\otimes P$ is a Poisson superalgebra with operations:
\begin{itemize}
\item
$(a\otimes v)\cdot (b\otimes w)= (-1)^{|v||b|}ab\otimes vw$,
\item
$\{a\otimes v, b\otimes w\}= (-1)^{|v||b|} (\{a, b\}\otimes vw+ ab\otimes \{v,w\})$, for all $a,b\in A$, $v,w\in P$.
\end{itemize}

\subsection{Jordan superalgebras}
While studying Jordan (super)algebras we always assume that $\ch K\ne 2$.
A {\it Jordan algebra} is an algebra $J$ satisfying the identities
\begin{itemize}
\item  $ab=ba$;
\item  $a^2(ca)=(a^2c)a$.
\end{itemize}
A {\em Jordan superalgebra} is a $\Z_2$-graded algebra  $J=J_{\bar 0} \oplus J_{\bar 1}$ satisfying the graded identities:
\begin{itemize}
\item  $ab=(-1)^{|a||b|}ba$;
\item  $(ab)(cd)+(-1)^{|b||c|}(ac)(bd)+(-1)^{(\!|b|+|c|)|d|}(ad)(bc) \\
=((ab)c)d+(-1)^{|b|(\!|c|+|d|)+|c||d|}((ad)c)b+(-1)^{|a|(\!|b|+|c|+|d|)+|c||d|}((bd)c)a.$
\end{itemize}

Let $A=A_{\bar 0}\oplus A_{\bar 1}$ be an associative superalgebra.
The same space supplied with
the product $a\circ b=\frac 12(ab+(-1)^{|a||b|}ba)$ is a Jordan superalgebra $A^{(+)}$.
A Jordan superalgebra $J$ is called {\it special} if it can be embedded into a Jordan
superalgebra of the type $A^{(+)}$.
Also, $J$ is called {\it i-special} (or {\it weakly special})
if it is a homomorphic image of a special one.

I.L.~Kantor suggested the following doubling process, which is applied to a Poisson (super)algebra $A$
and the result is  a Jordan superalgebra $\Kan(A)$~\cite{Kantor92}.
The $K$-space $\Kan(A)$ is the direct sum $A\oplus \bar A$,
where $\bar A$ is a copy of $A$,
let $a\in A$ then $\bar a$ denotes the respective element in $\bar A$.
In the construction of $\Kan(A)$,
the subspace $\bar A$ is supplied with the opposite $\Z_ 2$-grading, i.e., $|\bar a| = 1 - |a|$ for a $\Z_2$-homogeneous $a\in A$.
The multiplication $\bullet$ on $\Kan(A)$ is defined by:
\begin{align*}
a \bullet b      &= ab,\\
\bar a \bullet b &= (-1)^{|b|} \overline{ab},\\
a \bullet \bar b &= \overline{ab}, \\
\bar a \bullet \bar b &= (-1)^{|b|} \{a,b\},\qquad a,b\in A.
\end{align*}
This construction is important because it yielded a new series of finite dimensional simple Jordan superalgebras
$\Kan(\Lambda(n))$, $n\ge 2$,
where $\Lambda(n)$ is the Grassmann algebra in $n$ variables~\cite{Kantor92,KingMcCrimon92}.
\medskip

Simple finite dimensional nontrivial Jordan superalgebras over an algebraically closed field of characteristic zero
were classified~\cite{Kac77CA,Kantor92}.
Infinite-dimensional  simple $\Z$-graded Jordan superalgebras
with a unit element over an algebraically closed field of characteristic zero which components are uniformly bounded
are classified in~\cite{KacMarZel01}.
Recently, just infinite Jordan superalgebras were studied in~\cite{ZhePan17}.

\subsection{Growth}
We recall the notion of {\em growth}. Let $A$  be an associative (or Lie) algebra  generated by a finite set $X$.
Denote  by $A^{(X,n)}$ the subspace of $A$ spanned by all  monomials  in $X$
of length not  exceeding  $n$, $n\ge 0$.
In case of a Lie superalgebra of $\ch K=2$ we also consider formal squares of odd monomials of length at most $n/2$~\cite{PeOtto,PeSh18FracPJ}.
One defines an {\em  (ordinary) growth function}:
$$
\gamma_A(n)=\gamma_A(X,n)=\dim_KA^{(X,n)},\quad n\ge 0.
$$
Let $f,g:\N\to\R^+$ be eventually increasing and positive valued functions.
Write $f(n)\preccurlyeq g(n)$ if and only if there exist positive constants $N,C$ such that $f(n)\le g(Cn)$
for all $n\ge N$.
Introduce equivalence $f(n)\sim g(n)$ if and only if  $f(n)\preccurlyeq g(n)$ and $g(n)\preccurlyeq f(n)$.
By $f(n)\approx g(n)$, $n\to\infty$, denote that $\lim_{n\to\infty} f(n)/g(n)=1$.
Different generating sets of an algebra yield equivalent growth functions~\cite{KraLen}.

It is well known that the
exponential growth is the highest possible growth for finitely generated Lie and
associative algebras. A growth function $\gamma_A(n)$ is
compared with polynomial functions $n^\alpha$, $\alpha\in\R^+$, by
computing the {\em upper and lower Gelfand-Kirillov
dimensions}~\cite{KraLen}:
\begin{align*}
\GKdim A&=\limsup_{n\to\infty} \frac{\ln\gamma_A(n)}{\ln n}=\inf\{\a>0\mid \gamma_A(n)\preccurlyeq n^\a\} ;\\
\LGKdim A&=\liminf_{n\to\infty}\,  \frac{\ln\gamma_A(n)}{\ln n}=\sup\{\a>0\mid \gamma_A(n)\succcurlyeq n^\a\}.
\end{align*}
By Bergman's theorem, the Gelfand-Kirillov dimension of an associative algebra cannot belong to the interval $(1,2)$~\cite{KraLen}.
Similarly, Martinez and Zelmanov proved that there are no finitely generated Jordan algebras with Gelfand-Kirillov
dimension strictly between 1 and 2~\cite{MaZe96}.
But such a gap does not exist for Lie algebras, the Gelfand-Kirillov
dimension of a finitely generated Lie algebra can be an arbitrary number $\{0\}\cup [1,+\infty)$~\cite{Pe97}.
It is known that the construction of Golod yields associative nil-algebras of exponential growth.
Using specially chosen relations, Lenagan and Smoktunowicz
constructed associative nil-algebras of polynomial growth~\cite{LenSmo07}.

Suppose that $L$ is a Lie (super)algebra and $X\subset L$.
By $\Lie(X)$ denote the subalgebra of $L$ generated by $X$.
In case of associative, Poisson, and Jordan (super)algebras we use notations
$\Alg(X)$, $\Poisson(X)$, and $\Jord(X)$, respectively.
A grading of an algebra is called {\em fine} if
it cannot be splitted by taking a bigger grading group (see e.g.~\cite{BaSeZa01}).

Pro-$p$-groups and $\N$-graded Lie algebras cannot be simple.
Instead, one has another important notion.
A group (algebra) is {\it just infinite} if and only if it has no
non-trivial normal subgroups (ideals) of infinite index (codimension).
A group (algebra) is said {\it hereditary just infinite}
if and only if any normal subgroup (ideal) of finite index (codimension) is just infinite.
The Gupta-Sidki groups were the first in the class of periodic groups to be shown to be just infinite~\cite{GuptaSidki83A}.
Also, the Grigorchuk group is just infinite but not hereditary just infinite~\cite{Grigorchuk00horizons}.

\section{Jordan double of Lie superalgebra}

First, we recall the construction of a double of a Lie superalgebra suggested by the second author~\cite{She99}.
The goal of the present paper is to study its different applications.

Let $L$ be an arbitrary Lie superalgebra.
Its symmetric algebra $S(L)$ has the structure of a Poisson superalgebra.
Observe, that the subspace $H\subset S(L)$ spanned by all tensors of length at least two is its ideal.
Thus, one obtains a (rather trivial) Poisson superalgebra $P(L)=S(L)/H$,
which equivalently can be obtained as a vector space endowed with two Poisson products which are nontrivial in the following cases:
\begin{equation}\label{defP}
P(L)=\langle 1\rangle \oplus L,\qquad 1\cdot x=x,\quad \{x,y\}=[x,y], \quad x,y\in L.
\end{equation}
Using Kantor's double, define a Jordan superalgebra $\Jor(L)=\Kan(P(L))$.
Equivalently, one can just take a vector space supplied with a product $\bullet$
which is nontrivial in the following cases
(see the example at the end~\cite{She99}):
\begin{equation}\label{defJor}
\Jor(L)=\langle 1\rangle \oplus L\oplus \langle \bar 1\rangle \oplus \bar L,\qquad
\bar x\bullet \bar y=[x,y], \quad x\bullet \bar 1=(-1)^{|x|}\bar 1\bullet x=\bar x,\quad x,y\in L;\quad 1\text{ the unit}.
\end{equation}

If an associative superalgebra $A$ is just infinite then
the related Jordan superalgebra $A^{(+)}$ is just infinite as well~\cite{ZhePan17}.
We establish a similar fact, for convenience of the reader we repeat our arguments, see~\cite{PeSh18FracPJ}.
\begin{Lemma}\label{Ljust-inf-Jor}
Let $L$ be a Lie superalgebra, consider the Jordan superalgebra $\Jor(L)$.
\begin{enumerate}
\item $\Jor(L)$  is just infinite if and only if $L$ is just infinite.
\item The ideal without unit $\Jor^o(L)=L\oplus \langle \bar 1\rangle \oplus \bar L$ is solvable of length 3.
\item $(a^2)^2=0$ for any $a\in\Jor^o(L)$.
\end{enumerate}
\end{Lemma}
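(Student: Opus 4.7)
For part (i), the plan is to establish a correspondence between Lie ideals of $L$ and Jordan ideals of $\Jor(L)$ via $I \mapsto I \oplus \bar I$. First, for any ideal $J \subseteq \Jor(L)$, the subspace $I := J \cap L$ is a Lie ideal of $L$: given $x \in I$, one has $\bar x = x \bullet \bar 1 \in J$ by~\eqref{defJor}, hence $\pm[x,z] = \bar x \bullet \bar z \in J \cap L = I$ for every $z \in L$. Conversely, a direct check from the multiplication rules shows that $I \oplus \bar I$ is a Jordan ideal of $\Jor(L)$ whenever $I$ is a Lie ideal of $L$, and $\dim \Jor(L)/(I \oplus \bar I) = 2 + 2\dim L/I$. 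This yields the implication ``$\Jor(L)$ just infinite $\Rightarrow$ $L$ just infinite'' immediately. For the reverse, assuming $L$ just infinite (hence $\dim L = \infty$) and $J \subseteq \Jor(L)$ a nonzero ideal, it suffices to show $I = J \cap L \neq 0$, and then $J \supseteq I \oplus \bar I$ has finite codimension. The key auxiliary fact is that such an $L$ has trivial center: otherwise a homogeneous $0 \neq z \in Z(L)$ would span a $1$-dimensional ideal $Kz$ of finite codimension, forcing $\dim L < \infty$; similarly $L$ must be non-abelian. Supposing toward a contradiction that $I = 0$ and taking $0 \neq j = \alpha\cdot 1 + x + \beta\bar 1 + \bar y \in J$, the successive products $(j \bullet \bar 1)\bullet \bar z$, $(j \bullet \bar z) \bullet \bar w$, $j \bullet \bar z$, and $j \bullet z$ all land in $I$ (or force $\bar L \subseteq J$), forcing $x \in Z(L) = 0$, then $\alpha = 0$ (else $L$ abelian), then $y \in Z(L) = 0$, and finally $\beta = 0$, contradicting $j \neq 0$.

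For part (ii), the plan is to compute the Jordan derived series of $J := \Jor^o(L)$ directly from the multiplication rules. The only nontrivial products among the summands $L$, $\langle \bar 1\rangle$, $\bar L$ are $L \bullet \bar 1 = \bar L$ and $\bar L \bullet \bar L = [L,L] \subseteq L$, whence
\[
J^{(1)} = J \bullet J = \bar L + [L,L], \qquad J^{(2)} = J^{(1)} \bullet J^{(1)} = \bar L \bullet \bar L = [L,L], \qquad J^{(3)} = J^{(2)} \bullet J^{(2)} \subseteq L \bullet L = 0.
\]

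For part (iii), I would expand $a \bullet a$ for $a = x + \beta\bar 1 + \bar y \in \Jor^o(L)$. Writing $x = x_{\bar 0} + x_{\bar 1}$, the only surviving terms in $a^2$ are $\beta(x \bullet \bar 1 + \bar 1 \bullet x) = 2\beta\,\overline{x_{\bar 0}}$ (the odd component cancels since $\bar 1 \bullet x_{\bar 1} = -\overline{x_{\bar 1}}$) and $\bar y \bullet \bar y \in L$. Hence $a^2$ lies in $L \oplus \overline{L_{\bar 0}}$, where $\overline{L_{\bar 0}} = \{\bar u : u \in L_{\bar 0}\}$ sits in the \emph{odd} part of $\Jor(L)$ since $|\bar u| = 1 - |u| = 1$ for $u \in L_{\bar 0}$. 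The four terms of $(a^2)^2$ then all vanish: $L \bullet L = 0$ kills the $L$-part squared, $L \bullet \bar L = 0$ kills both cross terms, and $\bar u \bullet \bar u = 0$ for odd $\bar u \in \overline{L_{\bar 0}}$ by super-commutativity (equivalently $(-1)^{|u|}[u,u] = 0$ for $u \in L_{\bar 0}$). The main obstacle is the reverse direction of (i): without the center-vanishing observation, the components of a generic $j$ might only produce central elements of $L$, and one could not conclude $J \cap L \neq 0$.
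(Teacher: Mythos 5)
Your proposal is correct, and parts (ii) and (iii) are essentially the paper's own computations: the paper also writes a general element as $\alpha\bar 1+u_0+u_1+\bar w_0+\bar w_1$, observes that $a^2$ lands in $L+\bar L_{\bar 0}$ (your cancellation of the odd component of $\bar 1\bullet x$ against $x\bullet\bar 1$), and squares once more. The genuine difference is in the hard direction of (i). The paper passes to $\tilde H=H\cap(L\oplus\bar L)$, takes its projections $H_0\subseteq L$ and $\bar H_1\subseteq\bar L$, uses $\bar 1\bullet\tilde H\subseteq\tilde H$ and $\bar L\bullet\tilde H\subseteq\tilde H$ to obtain $[L,H_1]\subseteq H_0\subseteq H_1$, concludes that the projection $H_0$ is a Lie ideal, and splits into the cases ``$H_0$ of finite codimension'' and ``$H_0=0$, hence $H_1$ central,'' the latter contradiction again coming from a one-dimensional central ideal. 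You instead isolate the auxiliary facts $Z(L)=0$ and $[L,L]\ne 0$ up front and then annihilate a completely general element $j=\alpha\cdot 1+x+\beta\bar 1+\bar y$ of the ideal coordinate by coordinate via the products of~\eqref{defJor}, proving directly that $J\cap L\ne 0$. Both arguments rest on the same ingredients (an infinite-dimensional just infinite Lie superalgebra has trivial center and is non-abelian), but yours works with actual elements of the ideal rather than with projections of $\tilde H$; one small benefit is that it transparently disposes of the degenerate possibility $H\cap(L\oplus\bar L)=0$, a case the paper's dichotomy on $H_0$ does not address explicitly (there one cannot ``take $0\ne z\in H_1$''). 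The cost is a slightly longer chain of one-line product evaluations; the gain is that every hypothesis used ($\dim L=\infty$, $Z(L)=0$, non-abelianness) is made explicit.
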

\begin{proof}
Let $L$ be not just infinite. Then there exists an ideal of infinite codimension $0\ne I\triangleleft L$
and $I\oplus \bar I$ is a nontrivial ideal of infinite codimension in $\Jor(L)$. Therefore, $\Jor(L)$ is not just infinite.

Conversely, suppose that $L$ is just infinite.
By way of contradiction,
assume that $0\ne H\subset \Jor(L)$ is an ideal of infinite codimension.
Then $\tilde H=H\cap (L\oplus \bar L)\subset \Jor(L)$ is also an ideal of infinite codimension.
Denote by $H_0$ and $\bar H_1$
the projections of $\tilde H$ onto  $L$, $\bar L$, respectively ($\bar H_1$ being the copy of a subspace $H_1\subset L$).
Since $\tilde H$ is an ideal,
$\bar 1\bullet \tilde H=\bar H_0\subset \bar H_1$ and
$\bar L\bullet \tilde H=[L,H_1]\subset H_0$ and
we get $[L,H_1]\subset H_0\subset H_1\subset L$.
Hence $H_0\subset L$ is an ideal, which must be either zero or of finite codimension by our assumption.
Let $H_0\subset L$ be of finite codimension then $\tilde H\subset \Jor(L)$ is of finite codimension, a contradiction.
Now assume that $H_0=0$. Then $[L,H_1]=0$ and $H_1$ is central. By taking $0\ne z\in H_1$,
we get an ideal $\langle z\rangle \subset L$ of infinite codimension, a contradiction.
Thus, $\Jor(L)$ is just infinite.

To prove the second claim we repeat the arguments of~\cite{She99}.
Denote $J=\Jor^o(L)$.
Then $J^2\subset L\oplus \bar L$, $(J^2)^2\subset L$, and $((J^2)^2)^2=0$.
Thus, $J$ is solvable of length 3.

To prove the last claim let $a\in J$, then $a=\alpha \bar 1+u_0+u_1+\bar w_0+\bar w_1$,
where $\alpha \in K$, $u_0,w_0\in L_{\bar 0}$, $u_1,w_1\in L_{\bar 1}$.
Then $a^2=\{w_1,w_1\}+ 2\alpha\bar u_0 \in L_{\bar 0}+\bar L_{\bar 0}$.
By the same computations, $(a^2)^2=0$.
\end{proof}

\section{On Gelfand-Kirillov dimension of Jordan superalgebras}

In~\cite{PeSh18FracPJ} we constructed
a Jordan superalgebra $\KK$ whose Gelfand-Kirillov dimension belongs to $(1,2)$,
that is not possible for associative and Jordan algebras~\cite{KraLen,MaZe96}.
The goal of this section is to prove a more general fact that the gap $(1,2)$
does not exist for Jordan {\it super}algebras (Theorem~\ref{TgapSJordan}).

\begin{Lemma}\label{LJor}
Let $K$ be a field, $\ch K\ne 2$.
Suppose that a Lie superalgebra $L$ is
$\Z^k$-graded by multidegree in a generating set $X=\{x_1,\ldots,x_k\}$. Let
$L=\mathop{\oplus}\limits _{n= 1}^\infty L_n$ be the
$\N$-gradation by total degree in the generators.
Consider the Jordan double $J=\Jor(L)$. Then
\begin{enumerate}
\item
one obtains $J=\mathop{\oplus}\limits_{n=0}^\infty J_n$, the $\NO$-gradation by degree in the generating set $\bar X=X\cup \{\bar 1\}$, where
\item
$J_0=\langle 1\rangle$, $J_1=L_1\oplus \langle \bar 1\rangle$, and the remaining components are as follows:
$$J_{3n-2}=L_n,\quad J_{3n-1}=\bar L_n,\quad J_{3n}=0, \qquad n\ge 1. $$
\item $J$ is $\Z^{k+1}$-graded by multidegree in $\bar X$.
\end{enumerate}
\end{Lemma}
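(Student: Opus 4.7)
The plan is to exhibit the $\NO$-grading on $J=\Jor(L)$ by assigning consistent weights to the four summands of $\Jor(L)$ and verifying that the Jordan product $\bullet$ is additive in weight. From \eqref{defJor}, the only nontrivial products involving a non-unit summand are
$$L_n\bullet \bar 1 = \bar L_n \quad\text{and}\quad \bar L_n\bullet \bar L_m\subset L_{n+m};$$
all remaining products within $\{L,\langle\bar 1\rangle,\bar L\}$ vanish, while multiplication by $1$ is the identity.

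I would define $\wt(1)=0$, $\wt(\bar 1)=1$, $\wt(L_n)=3n-2$, $\wt(\bar L_n)=3n-1$ for $n\ge 1$, and let $J_m$ be the span of all summands of weight $m$. These weights are forced by the relations $\wt(\bar L_n)=\wt(L_n)+\wt(\bar 1)$ and $\wt(L_{n+m})=\wt(\bar L_n)+\wt(\bar L_m)$ together with $\wt(L_1)=1$; the solution is the arithmetic pattern $1,2,4,5,7,8,\dots$, which skips every third integer, reproducing the table in the statement and giving $J_{3n}=0$ for $n\ge 1$. The decomposition $J=\bigoplus_m J_m$ is immediate since the underlying pieces $\langle 1\rangle$, $L_n$, $\langle\bar 1\rangle$, $\bar L_n$ are linearly independent.

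Next I would check $J_m\bullet J_{m'}\subset J_{m+m'}$ by inspecting each nontrivial case using the multiplication rules above (for instance, $J_{3n-1}\bullet J_{3m-1} = \bar L_n\bullet\bar L_m\subset L_{n+m} = J_{3(n+m)-2}$, and the exponents agree since $(3n-1)+(3m-1) = 3(n+m)-2$). I would then argue that $\bar X=X\cup\{\bar 1\}$ generates $J$ by induction on $n$: $L_1$ is spanned by $X$; the map $z\mapsto z\bullet\bar 1$ produces $\bar L_n$ from $L_n$; and the product $\bar L_a\bullet\bar L_b$ produces $[L_a,L_b]$, which suffices to obtain $L_n$ for all $n\ge 2$ since $L$ is generated by $X$. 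Hence the subalgebra generated by $\bar X$ contains every $J_m$, so the $\NO$-grading coincides with grading by degree in $\bar X$. For (iii), extend the $\Z^k$-multidegree on $L$ to $\Z^{k+1}$ on $J$ by assigning $\bar 1$ the multidegree $e_{k+1}$, so $\deg(\bar z)=\deg(z)+e_{k+1}$; the same case analysis shows the two nonzero products respect this finer grading.

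The main obstacle is justifying the odd-looking weights $3n-2$ and $3n-1$: one must argue that no element of $L_n$ can be realized in $J$ using fewer than $3n-2$ generators from $\bar X$. This is exactly the content of the vanishings $L\bullet L=L\bullet\bar L=\bar 1\bullet\bar L=0$, which force every Lie bracket among elements of $L$ to be realized in $J$ only via a detour through $\bar L$; converting each factor to $\bar L$ consumes one $\bar 1$, so each Lie bracket costs two extra generators beyond the two factors being bracketed, and the claim $\wt(L_n)=3n-2$ follows inductively.
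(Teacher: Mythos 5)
Your proof is correct and rests on the same core facts as the paper's: the only nonzero products in $\Jor(L)$ are $L_n\bullet\bar 1=\bar L_n$ and $\bar L_n\bullet\bar L_m\subset L_{n+m}$, which force the weights $3n-2$ and $3n-1$ and make the ``obstacle'' you worry about automatic once the weight decomposition is shown to be multiplicative and $\bar X$ is shown to generate. The paper packages the same content as a bottom-up induction computing the span $J_{n,k}$ of monomials with $n$ letters from $X$ and $k$ letters $\bar 1$ (nonzero only for $k=2n-2$ or $k=2n-1$), whereas you decree the grading and then verify compatibility and generation; the two arguments are essentially equivalent.
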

\begin{proof}
The Jordan superalgebra $J=\Jor(L)$ is generated by $\bar X=X\cup \{\bar 1\}$.
Clearly, the $\N$-grading of $L$ by degree in $X$ extends to $J$ as well.
Let $J_{n,k}$ denote the space of Jordan monomials that include $n$ letters from $X$ and $k$ letters $\bar 1$, where $n,k\ge 0$.
Let us prove that
\begin{equation}\label{jnk}
J_{n,k}=
\begin{cases}
  \langle 1\rangle, & n=k=0;\\
  \langle \bar 1\rangle, & n=0,\ k=1;\\
\delta_{k,2n-2}L_{n}, &      k \text { even};\\
\delta_{k,2n-1}\bar L_{n}, & k  \text { odd};
\end{cases}\qquad\qquad n,k\ge 0.
\end{equation}
We proceed by induction on $l=n+k$.
The base of induction is $l=0$ and  $l=1$, in which cases we have $J_{0,0}=\langle 1\rangle $,
$J_{1,0}=L_{1}=\langle X\rangle$ and  $J_{0,1}=\langle \bar 1\rangle$.
Assume that $l\ge 2$.
Observe that to have a nonzero component we need at least one letter $\bar 1$, thus $k\ge 1$.
Let $k\ge 1$ be odd.
Since $\bar{\quad}$ yields a $\Z_2$-grading we have
$J_{n,k}\subset \bar L$, such elements can appear only as the products $\bar 1\bullet  L$.
Using the inductive assumption,
$$J_{n,k}=\bar 1\bullet J_{n,k-1}=\bar 1\bullet \delta_{k-1,2n-2} L_{n}=\delta_{k-1,2n-2}\bar L_{n}=\delta_{k,2n-1}\bar L_n.$$
Let $k\ge 2$ be even.
As above, $J_{n,k}\subset L$ and such elements can appear only as the products $\bar L \bullet \bar L$.
Using the inductive assumption, we have
$$J_{n,k}
=\sum_{\substack{n_1+n_2=n\\ k_1+k_2=k\\ k_1,k_2 \text { odd}}} J_{n_1,k_1} \bullet J_{n_2,k_2}
=\sum_{\substack{n_1+n_2=n\\ k_1+k_2=k\\ k_1=2n_1-1\\ k_2=2n_2-1}} \bar L_{n_1} \bullet \bar L_{n_2}
=\sum_{\substack{n_1+n_2=n\\k=2n-2} } [L_{n_1},L_{n_2}]=\delta_{k,2n-2}L_n.
$$
The inductive step is proved.

By~\eqref{jnk},
we obtain a direct sum $J=\mathop{\oplus}\limits_{n,k\ge 0}J_{n,k}$,
which is a $\Z^2$-gradation of $J$ by the respective multidegree in $X\cup \{\bar 1\}$.
By setting $J_m=\mathop{\oplus}\limits_{n+k=m} J_{n,k}$, $m\ge 0$, we get the claimed $\NO$-gradation.
Counting the total degree in~\eqref{jnk}, we get~(ii).
The multidegree $\Z^{k+1}$-gradation is proved similarly.
\end{proof}
Define generating functions:
\begin{align*}
\H(L,t)&=\sum_{n=1}^\infty \dim L_n t^n,\\
\H(J,t_1,t_2)&=\sum_{n,m=0}^\infty \dim J_{n,m} t_1^n t_2^m,\\
\H(J,t)&=\sum_{n=0}^\infty \dim J_n t^n=\H(J,t,t).
\end{align*}
\begin{Corollary} \label{Ccomp}
Using notations above
\begin{enumerate}
\item
\begin{align*}
\H(J,t_1,t_2)&=1+t_2+\bigg(\frac 1{t_2}+\frac 1{t_2^2}\bigg)\H(L,t_1t_2^2);\\
\H(J,t)&=1+t+\bigg(\frac 1{t}+\frac 1{t^2}\bigg)\H(L,t^3).
\end{align*}
\item We obtain an equivalent growth function: $\gamma_J(\bar X,n)\sim \gamma_L(X,n)$.
\item $J$ has the same (lower and upper) Gelfand-Kirillov dimensions as those for $L$ (provided that they exist).
\item Let $\gamma_L(X,n)\approx Cn^r$, $n\to \infty$ for some constants $C>0$, $r\ge 1$,
moreover assume that $C_1n^{r-1}\le \dim L_n\le C_2n^{r-1}$, $n\ge n_0$, for some constants $n_0,C_1,C_2$.
Then $\gamma_J(\bar X,n)\approx  2C(n/3)^r$, $n\to\infty$.
\end{enumerate}
\end{Corollary}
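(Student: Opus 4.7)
The plan is to read everything off Lemma~\ref{LJor} and then handle the asymptotics carefully, part by part.

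For~(i), I would simply substitute the explicit description \eqref{jnk} from Lemma~\ref{LJor} into the definition of $\H(J,t_1,t_2)$. Separating the two special components $J_{0,0}=\langle 1\rangle$ and $J_{0,1}=\langle\bar1\rangle$ from the rest gives
\begin{equation*}
\H(J,t_1,t_2)=1+t_2+\sum_{n\ge 1}\dim L_n\,t_1^n t_2^{2n-2}+\sum_{n\ge 1}\dim L_n\,t_1^n t_2^{2n-1},
\end{equation*}
and pulling the factors $t_2^{-2}$ and $t_2^{-1}$ out of the sums yields the stated identity in the variable $t_1 t_2^2$. Specializing $t_1=t_2=t$ is immediate because $t_1 t_2^2=t^3$.

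For~(ii), I would use the coarsened description $J_{3k-2}=L_k$, $J_{3k-1}=\bar L_k$, $J_{3k}=0$ (for $k\ge 1$) together with the two unit components. Summing dimensions up to degree $n$ gives, essentially,
\begin{equation*}
\gamma_J(\bar X,n)=2+2\sum_{k=1}^{\lfloor(n+2)/3\rfloor}\dim L_k\quad\text{(with a possible last single term)},
\end{equation*}
so $\gamma_J(\bar X,n)$ lies between $2\gamma_L(X,\lfloor n/3\rfloor)$ and $2\gamma_L(X,n)+O(1)$. Both inequalities show $\gamma_J(\bar X,n)\sim\gamma_L(X,n)$ by the definition of $\sim$ (the factor $3$ in the argument and the factor $2$ in front are absorbed). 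Part~(iii) is then automatic: the Gelfand--Kirillov dimension depends only on the $\sim$-class of the growth function.

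The delicate part is~(iv), where we need the sharp asymptotic $\approx$ rather than just $\sim$. Writing $m=\lfloor n/3\rfloor$ and using the expression above,
\begin{equation*}
\gamma_J(\bar X,n)=2\gamma_L(X,m)+\varepsilon_n,
\end{equation*}
where the correction $\varepsilon_n$ accounts for the two unit summands and for the at most one extra term of the form $\dim L_{m+1}$ coming from $J_{3m+1}=L_{m+1}$ when $n\equiv 1\pmod 3$. By the hypothesis $C_1 k^{r-1}\le\dim L_k\le C_2 k^{r-1}$ we have $\varepsilon_n=O(n^{r-1})$, which is negligible against $n^r$. Combining this with $\gamma_L(X,m)\approx C m^r=C(n/3)^r(1+o(1))$ gives $\gamma_J(\bar X,n)\approx 2C(n/3)^r$, as claimed. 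The main obstacle is precisely bookkeeping these boundary terms: without the two-sided bound on $\dim L_n$ one could not rule out oscillations of order $n^{r-1}$ that would spoil the $\approx$ relation, and this is why the extra hypothesis is imposed.
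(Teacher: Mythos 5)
Your proposal is correct and follows essentially the same route as the paper: both read the component dimensions off Lemma~\ref{LJor} (formula~\eqref{jnk}) to get the generating-function identities, then express $\gamma_J(\bar X,n)$ exactly as $2+2\gamma_L(X,m)$ up to a single boundary term $\dim L_m$ depending on $n \bmod 3$, and finish with the polynomial estimates. The paper states this more tersely, and your write-up just fills in the same bookkeeping explicitly.
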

\begin{proof}
Using~\eqref{jnk} we get the formulas for the generating functions.
We have the growth functions $\gamma_L(n,X)=\sum_{k=1}^n\dim L_n$, $n\ge 1$, and
$\gamma_J(n,\bar X)=\sum_{k=0}^n\dim J_n$, $n\ge 0$.
By Lemma~\ref{LJor} (ii), we get
\begin{align*}
  \gamma_{J}(\bar X, 3m)=\gamma_{J}(\bar X, 3m-1)& =2+2\gamma_L(X,m); \\
  \gamma_{J}(\bar X,3m-2)&=2+2\gamma_L(X,m)-\dim L_m,\qquad m\ge 1.
\end{align*}
Now it remains to use the polynomial estimates on the growth of $L$.
\end{proof}
\begin{Theorem}\label{TgapSJordan}
Let $K$ be a field, $\ch K\ne 2$.
Fix any real number $r\ge 1$.
There exists a three generated Jordan superalgebra $J$ with the following properties.
\begin{enumerate}
\item $\GKdim J=\LGKdim J=r$;
\item $J$ is graded by degree in the generators, we have $J=\mathop{\oplus}\limits_{n=0}^\infty J_n$ where
$J_{3n}=0$ for all $n\ge 1$;
\item its ideal without unit $J^0$ is solvable of length 3.
\end{enumerate}
\end{Theorem}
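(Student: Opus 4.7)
The plan is to produce $J$ as $\Jor(L)$ for a suitable Lie algebra $L$, reducing the theorem to the existence of a two-generated Lie algebra with prescribed Gelfand-Kirillov dimension and invoking the machinery already set up in Lemma~\ref{LJor}, Corollary~\ref{Ccomp}, and Lemma~\ref{Ljust-inf-Jor}.

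First, I would invoke the result cited from~\cite{Pe97}: for any real $r\ge 1$ there is a finitely generated Lie algebra $L$, generated by a finite set $X$ (in the construction of~\cite{Pe97} one may take $|X|=2$), graded by degree in $X$, with $\GKdim L=\LGKdim L=r$. View $L$ as a Lie superalgebra concentrated in even degree, so that all the results of Section~2 apply. Setting $J=\Jor(L)$ and $\bar X=X\cup\{\bar 1\}$, we obtain a three-generated Jordan superalgebra (assuming $|X|=2$).

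Next, Lemma~\ref{LJor} (ii) gives automatically the required gradation $J=\bigoplus_{n\ge 0}J_n$ by degree in $\bar X$, with $J_{3n}=0$ for all $n\ge 1$, which is item (ii) of the theorem. Item (iii), namely the solvability of length $3$ of the ideal $J^o=L\oplus\langle\bar 1\rangle\oplus\bar L$, follows directly from Lemma~\ref{Ljust-inf-Jor} (ii). Finally, Corollary~\ref{Ccomp} (iii) yields $\GKdim J=\GKdim L=r$ and $\LGKdim J=\LGKdim L=r$, giving item (i).

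The only delicate point—and the step most deserving of care—is to make sure the Lie algebra supplied by~\cite{Pe97} satisfies the strengthened hypothesis $\LGKdim L=\GKdim L=r$ and not just $\GKdim L=r$. If the reference supplies only the upper dimension, I would either (a) pick $L$ from one of the refined constructions in~\cite{Pe97} whose growth function is known to be precisely equivalent to $n^r$ (this is the standard output of those wreath-product-type constructions), or (b) appeal to the estimate in Corollary~\ref{Ccomp} (iv) in the polynomial case; since the equivalence $\gamma_J(\bar X,n)\sim\gamma_L(X,n)$ in Corollary~\ref{Ccomp} (ii) transports both upper and lower dimensions, this is essentially automatic. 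In either case the conclusion is immediate. No further computation is needed, since all structural statements have been established at the level of Lemma~\ref{LJor} and Lemma~\ref{Ljust-inf-Jor}.
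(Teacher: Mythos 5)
Your proposal is correct and follows essentially the same route as the paper: take the two-generated $\N_0^2$-graded Lie algebra $L$ from~\cite{Pe97} with $\GKdim L=\LGKdim L=r$ (the paper records the explicit bounds $C_1n^{r-1}\le\dim L_n\le C_2n^{r-1}$, which settle the point you flag about the lower dimension), form $J=\Jor(L)$, and apply Lemma~\ref{LJor}, Corollary~\ref{Ccomp}, and Lemma~\ref{Ljust-inf-Jor} exactly as you do.
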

\begin{proof}
The first author constructed a 2-generated Lie algebra $L$ such that $\GKdim L=\LGKdim L=r$ and
$L$ is $\NO^2$-graded by multidegree in the generators,
moreover, we have estimates $C_1n^{r-1}\le \dim L_n\le C_2n^{r-1}$, $n\ge 1$, for some constants $C_1,C_2$~\cite{Pe97}.
Also $(L^2)^3=0$, this condition can be written as
$L\in \mathbf N_2\mathbf A$ using notations of varieties of Lie algebras~\cite{Ba}.

Now we consider $J=\Jor(L)$ and apply Lemma~\ref{LJor}, Corollary~\ref{Ccomp}, and
Lemma~\ref{Ljust-inf-Jor}.
\end{proof}

\section{Just infinite nil Jordan superalgebra of finite width}

The Grigorchuk and Gupta-Sidki groups play fundamental role in modern group theory~\cite{Grigorchuk80,GuptaSidki83}.
They are natural examples of self-similar finitely generated periodic groups.
We discuss their analogues in different classes of algebras.
First, we recall their properties.

\subsection{Examples of infinite $p$-groups, their width, and related Lie rings}
Let $G$ be a group and $G=G_1\supset G_2\supset \cdots$ a {\it central series}, i.e. $(G_i,G_j)\subset G_{i+j}$ for all $i,j\ge 1$.
One constructs a related $\N$-graded Lie ring $L=\oplus_{i\ge 1} L_i$, where $L_i= G_i/G_{i+1}$, $i\ge 1$.
A product is given by $[a G_{i+1},b G_{j+1}]=(a,b)G_{i+j+1}$,
where $a\in G_i$, $b\in G_j$, and $(a,b)=a^{-1}b^{-1}ab$.
In particular, consider the {\it lower central series} $\gamma_n(G)$, $n\ge 1$ formed by the commutator subgroups.
One obtains the Lie ring
$$
L(G)=\mathop{\oplus}\limits_{n\ge 1} \gamma_n(G)/\gamma_{n+1}(G).
$$
Let $\Delta$ be the {\it augmentation ideal} of the group ring $K[G]$.
One defines the {\it dimension subgroups} $G_n=\{g\in G\mid 1-g\in \Delta^n\}$, $n\ge 1$,
they yield a central series.
Assume that $\ch K=p>0$. Then $(G_n|n\ge 1)$ is also called a {\it lower central $p$-series},
its factors are elementary abelian $p$-groups, and
one obtains a restricted Lie algebra over the prime field $\F_p$~\cite{Passi}:
$$
L_{\F_p}(G)=\mathop{\oplus}\limits_{n\ge 1} G_n/G_{n+1}.
$$
One also defines $L_{R}(G)=L_{\F_p}(G)\otimes_{\F_p} R$, where $R$ is a commutative ring.
Structural constants of both Lie rings $L(G)$ and $L_{\F_p}(G)$ of the Grigorchuk group $G$ are computed in~\cite{BaGr00}.
Also, Bartholdi presented $L_{\F_2}(G)$ as a self-similar restricted Lie algebra~\cite{Bartholdi15}.

A graded Lie ring $L=\oplus_{i\ge 1} L_i$ is said {\it nil graded} if
for any homogeneous element $x\in L_i$, $i\ge 1$, the mapping $\ad x$ is nilpotent.
In case of the Grigorchuk group $G$, the restricted Lie algebra $L_{\F_2}(G)$
is nil graded because the group $G$ is periodic~\cite{BaGr00,Zel93}.
There is also a similar general result: the Lie ring $L(G)$ associated to the lower central series of a finitely
generated residually-$p$ torsion group is nil graded~\cite{MaZe16}.
Thus, the Lie ring $L(G)$ of the Grigorchuk group $G$ is also nil graded.
On the other hand, the following ring extension of the restricted Lie algebra related to the dimension series
$L_{\F_2[x,y]}(G)$ of the Grigorchuk group $G$ is no longer nil graded, namely, the first component has non-nil elements~\cite{SmaZel06}.
(Those arguments  also one can apply to the Lie ring $L(G)$ while extending scalars to $\Z[x,y]$).
Bartholdi established a stronger fact that
the restricted Lie algebra $L_{\F_2}(G)$ over the prime field $\F_2$
is nil graded while $L_{\F_4}(G)$ (i.e. extension to the field $\F_4$) is no longer nil graded~\cite{Bartholdi15}.

A residually $p$-group $G$ is said to be of {\it finite width} if
all factors $G_i/G_{i+1}$ are finite groups with uniformly bounded orders.
The Grigorchuk group $G$ is of finite width, namely,
$\dim_{\F_2} G_i/G_{i+1}\in\{1,2\}$ for $i\ge 2$~\cite{Rozh96,BaGr00}.
Thus, the respective Lie algebra $L_{\F_2}(G)=\oplus_{i\ge 1} L_i$ is of linear growth.
On the other hand, the Gupta-Sidki group is of infinite width~\cite{Bartholdi05}.

Similarly, one defines a {\it width} of a residually nilpotent Lie (super)algebra $L$
as the maximum of dimensions of the lower central series factors $L^n/L^{n+1}$, $n\ge 1$.
Lie algebras of finite width over a field of positive characteristic
and a possibility of their classification under additional conditions are discussed in~\cite{ShaZel97,ShaZel99}.
Infinite dimensional $\N$-graded Lie algebras $L=\mathop{\oplus}\limits_{n=1}^\infty L_n$
with one-dimensional components in characteristic zero were classified by Fialowski~\cite{Fial83}.

\subsection{Nonexistence of nil graded Lie algebras in characteristic zero and Jordan algebras of characteristic not 2}
Since the Grigorchuk group is of finite width, probably,
a "right analogue" of it should be a Lie algebra of finite width having $\ad$-nil elements.
In the next result, the components are of bounded dimension and consist of $\ad$-nil elements.
Informally speaking, there are no "natural analogs" of the Grigorchuk and Gupta-Sidki groups
in the world of {\it Lie algebras of characteristic zero},
we say this strictly in terms of the following result.
\begin{Theorem}[{Martinez and Zelmanov~\cite{MaZe99}}]
\label{TMarZel}
Let $L=\oplus_{\a\in\Gamma}L_\alpha$ be a Lie algebra over a field $K$ of
characteristic zero graded by an abelian group $\Gamma$. Suppose that
\begin{enumerate}
\item
there exists $d>0$ such that $\dim_K L_\alpha \le d $ for all $\alpha\in\Gamma$,
\item
every homogeneous element $a\in L_\a$, $\a\in\Gamma$, is ad-nilpotent.
\end{enumerate}
Then the Lie algebra $L$ is locally nilpotent.
\end{Theorem}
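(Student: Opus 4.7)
The plan is to reduce the statement to Zelmanov's theorem that a finitely generated Lie algebra over a field of characteristic zero satisfying an Engel identity is nilpotent.

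First, standard reductions. Since local nilpotency means that every finitely generated subalgebra is nilpotent, and since every element of $L$ is a finite sum of homogeneous components, it suffices to prove that any subalgebra $M=\Lie(a_1,\dots,a_k)$ with homogeneous generators $a_i\in L_{\alpha_i}$ is nilpotent. The grading restricts, making $M$ graded by the finitely generated abelian subgroup $\Gamma'=\langle\alpha_1,\dots,\alpha_k\rangle\cong\Z^s\oplus F$ with $F$ finite, still with $\dim_K M_\alpha\le d$. Extending scalars to an uncountable algebraically closed overfield preserves both hypotheses, and nilpotency descends from the extension, so one may further assume $K$ is algebraically closed and uncountable.

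Second, and this is the heart of the matter, one establishes an \emph{Engel identity} on $M$: a single integer $N$ with $(\ad x)^N=0$ for every $x\in M$. The starting observation is per-component: for each $\alpha\in\Gamma'$, the sets $V_n^\alpha=\{x\in M_\alpha:(\ad x)^n=0\}$ are Zariski-closed in the finite-dimensional affine space $M_\alpha$, and their union is $M_\alpha$ by hypothesis (ii); since $M_\alpha$ is irreducible and $K$ is uncountable algebraically closed, a Baire-type argument yields a single index $n_\alpha$ with $V_{n_\alpha}^\alpha=M_\alpha$. One then needs to upgrade this in two ways: (a) pass from pointwise bounds $n_\alpha$ to a uniform $N$ valid for \emph{every} component, using the uniform bound $d$ on $\dim M_\alpha$ together with the graded structure and a Shirshov-style height argument; and (b) promote ad-nilpotency of homogeneous elements to an Engel identity for arbitrary, non-homogeneous $x\in M$, which is where $\ch K=0$ enters crucially through Jacobson's observation that $\exp(\ad a)$ is an algebra automorphism whenever $\ad a$ is nilpotent, yielding a large group of automorphisms permuting the graded pieces.

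Third, once the Engel identity holds on the finitely generated $M$, Zelmanov's theorem (finitely generated Engel Lie algebras in characteristic zero are nilpotent) delivers nilpotency of $M$, whence $L$ is locally nilpotent.

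The main obstacle is squarely the middle step: bootstrapping from ad-nilpotency of homogeneous elements with \emph{a priori} unbounded indices to a single uniform Engel bound valid on all of $M$. Both hypotheses are essential precisely here: without the width bound $\dim L_\alpha\le d$ one gets the Grigorchuk and Gupta-Sidki type counterexamples in positive characteristic referenced earlier, and without $\ch K=0$ the irreducibility/Baire and $\exp(\ad\cdot)$ arguments both collapse.
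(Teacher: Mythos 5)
First, a point of order: the paper does not prove this statement at all. Theorem~\ref{TMarZel} is imported verbatim from Martinez and Zelmanov~\cite{MaZe99} and is used in the paper only as a foil (to contrast with the superalgebra examples $\QQ$, $\RR$, $\JJ$), so there is no ``paper's own proof'' to compare your argument against; any assessment has to be of your sketch on its own terms.

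On those terms, your outline has the right general shape --- reduce to a finitely generated subalgebra graded by a finitely generated subgroup, force a uniform Engel identity, and invoke Zelmanov's theorem that $N$-Engel Lie algebras in characteristic zero are nilpotent --- and you correctly locate where both hypotheses must enter. But as written it is a plan, not a proof: the two steps you label (a) and (b) are precisely the content of the Martinez--Zelmanov paper, and you only name them (``a Shirshov-style height argument,'' ``a large group of automorphisms permuting the graded pieces'') without supplying any argument. Passing from per-component indices $n_\alpha$ to one $N$ uniform over the infinitely many components of $\Gamma'$, and then from homogeneous to arbitrary elements, is exactly the hard part; nothing in your text shows how the bound $d$ and the grading accomplish either. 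There is also a circularity in your reduction: you extend scalars to an uncountable algebraically closed field in order to run the Baire/irreducibility argument on each $M_\alpha$, but hypothesis (ii) is a statement about $K$-linear combinations of homogeneous elements, and it is not clear that every element of $M_\alpha\otimes_K\bar K$ remains ad-nilpotent --- that would already require a uniform nilpotency bound on $M_\alpha$ over $K$ (so that the condition becomes Zariski-closed and detectable on the $K$-points), which is the very thing the scalar extension was introduced to produce. Over a countable field such as $\Q$ the union $M_\alpha=\bigcup_n V_n^\alpha$ of proper closed subsets is perfectly possible, so this step needs a genuinely different idea. In short, the skeleton is plausible and consistent with how such results are proved, but the proposal does not constitute a proof of the theorem.
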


Strictly in terms of the next result, we say again that
there are no "natural analogs" of the Grigorchuk and Gupta-Sidki groups in the class of {\it Jordan algebras} too.

\begin{Theorem}[{Zelmanov, private communication~\cite{Zelmanov}}]\label{TZelmanov}
Jordan algebras satisfy a verbatim analogue of Theorem~\ref{TMarZel} over a field $K$, $\ch K\ne 2$.
\end{Theorem}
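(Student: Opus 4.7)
The natural approach is to lift the problem from Jordan algebras to Lie algebras via the Tits--Kantor--Koecher (TKK) functor and then invoke Theorem~\ref{TMarZel}. Concretely, associate to $J$ the $\Z$-graded Lie algebra
$$
\mathfrak{L}=\mathrm{TKK}(J)=J^{-}\oplus\mathrm{Instr}(J)\oplus J^{+},
$$
where $J^{\pm}$ are two copies of $J$ placed in $\Z$-degrees $\pm 1$ and the inner structure Lie algebra $\mathrm{Instr}(J)\subset\End(J)$ lives in $\Z$-degree $0$, spanned by the operators $V_{x,y}$ together with their commutators. The $\Gamma$-grading of $J$ extends to a $(\Gamma\oplus\Z)$-grading of $\mathfrak{L}$. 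A standard fact is that $J$ is locally nilpotent if and only if $\mathrm{TKK}(J)$ is, and to prove local nilpotence one may pass to a finitely generated homogeneous subalgebra $J_{0}\subset J$. So the statement reduces to verifying the two hypotheses of Theorem~\ref{TMarZel} for $\mathrm{TKK}(J_{0})$.

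The bounded-dimension hypothesis transfers fairly directly. The $(\alpha,\pm 1)$-components of $\mathrm{TKK}(J_{0})$ equal $(J_{0})_{\alpha}$, of dimension at most $d$. The $(\alpha,0)$-component is spanned by the operators $V_{x,y}$ with $|x|+|y|=\alpha$; since $J_{0}$ is finitely generated and each $(J_{0})_{\beta}$ is finite-dimensional, in each fixed total degree only finitely many $\beta\in\Gamma$ satisfy $(J_{0})_{\beta}\ne 0$. A uniform bound on $\dim\mathrm{Instr}(J_{0})_{(\alpha,0)}$ then follows from a straightforward counting argument together with the standard linearization relation $V_{x,y}+V_{y,x}=2L_{x\cdot y}$ and its consequences.

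The ad-nilpotence hypothesis is the harder one and is the main obstacle. For $x\in J^{\pm}$ at $\Z$-degree $\pm 1$, the three-term support of $\mathfrak{L}$ in $\{-1,0,+1\}$ forces $(\ad x)^{3}=0$ automatically. For a degree-zero element $V_{x,y}$ with $|x|+|y|=\alpha$, ad-nilpotence on $\mathfrak{L}$ amounts to $V_{x,y}$ being a nilpotent endomorphism of $J_{0}$. Deriving this from the bare nil-hypothesis on homogeneous elements of $J_{0}$ requires genuine Jordan-theoretic input: one would use identities expressing sufficiently high powers of $V_{x,y}$ through the $U$-operators $U_{x}, U_{y}$, so that nilpotence of $x$ and $y$ (equivalently of $U_{x}$ and $U_{y}$) implies nilpotence of $V_{x,y}$; alternatively, one may invoke Zelmanov's theorem that every Jordan nil algebra of bounded nil index is locally nilpotent, applied directly to $J_{0}$. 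With both hypotheses in hand, Theorem~\ref{TMarZel} gives local nilpotence of $\mathrm{TKK}(J_{0})$; since $\mathrm{TKK}(J_{0})$ is finitely generated it is then nilpotent, and therefore so is $J_{0}$, completing the reduction.
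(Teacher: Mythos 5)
The paper offers no proof of Theorem~\ref{TZelmanov} at all: it is quoted as an unpublished result of Zelmanov~\cite{Zelmanov}, so there is no argument in the text to compare yours against, and your proposal must be judged on its own. The reduction to Theorem~\ref{TMarZel} via the Tits--Kantor--Koecher construction is a natural first idea, but both transfer steps --- which is exactly where the entire difficulty of the theorem is concentrated --- are asserted rather than proved, and neither goes through as written. For the dimension hypothesis: the component $\mathrm{Instr}(J_0)_{(\alpha,0)}$ is spanned by the operators $V_{x,y}$ with $x\in (J_0)_\beta$ and $y\in (J_0)_{\alpha-\beta}$, where $\beta$ runs over the support of $J_0$ in $\Gamma$. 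Each summand $V_{(J_0)_\beta,(J_0)_{\alpha-\beta}}$ has dimension at most $d^2$, but the number of admissible $\beta$ is not bounded independently of $\alpha$: already for $\Gamma=\Z$ with $J_0$ generated in degree $1$ and supported on all positive integers, the degree-$(\alpha,0)$ part is a sum of roughly $\alpha$ such pieces. Your ``straightforward counting argument'' therefore only gives finite-dimensionality of each component, not the uniform bound that hypothesis (i) of Theorem~\ref{TMarZel} requires, and the linearization identities for $V_{x,y}$ do not collapse this sum.

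The ad-nilpotence step is worse: it is simply false that nilpotence of $x$ and $y$ (equivalently of $U_x$ and $U_y$) forces nilpotence of $V_{x,y}$, so no identity expressing powers of $V_{x,y}$ through $U_x,U_y$ can exist. In the Jordan algebra of $2\times 2$ matrices take $x=e_{12}$, $y=e_{21}$: both square to zero, yet $V_{x,y}e_{11}=xye_{11}+e_{11}yx=e_{11}$, so $V_{x,y}$ has a nonzero fixed vector. (This does not contradict the theorem --- in the natural grading the degree-zero homogeneous element $e_{11}$ is a non-nil idempotent --- but it shows that any proof must use the bounded-dimension and homogeneity hypotheses in an essential, non-formal way.) Your fallback, Zelmanov's local nilpotence theorem for nil Jordan algebras of bounded index, is also not applicable: the hypothesis makes only \emph{homogeneous} elements nil, with no uniform bound on the index and no information about non-homogeneous elements, so $J_0$ is not known to be a nil algebra. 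Finally, note that the published proof of Theorem~\ref{TMarZel} in~\cite{MaZe99} itself relies heavily on Zelmanov's Jordan-theoretic machinery (absolute zero divisors, sandwich algebras), so a reduction of the Jordan statement to the Lie statement via TKK risks being circular in substance even where it is formally meaningful.
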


\subsection{Examples of nil restricted Lie algebras and Lie superalgebras in arbitrary chracteristics}
The first author constructed an analogue of the Grigorchuk group
in case of restricted Lie algebras of characteristic~2~\cite{Pe06},
Shestakov and Zelmanov extended this construction to an arbitrary positive characteristic~\cite{ShZe08}.
Thus, we have examples of finitely generated restricted Lie algebras with a nil $p$-mapping.
See further constructions in~\cite{PeSh09,PeSh13fib,PeShZe10,Bartholdi15}.
A family of restricted Lie algebras of slow growth with a nil $p$-mapping
was constructed in~\cite{Pe17}, in particular, we construct a continuum
subfamily of such algebras with Gelfand-Kirillov dimension one
but their growth is not linear.
As a particular case, we get a nil $\Z_2$-graded Lie algebra of width~2 over a field of characteristic~2,
which $p$-hull has width~3 and a non-nil $p$-mapping~\cite{PeOtto}.

In contrast with Theorem~\ref{TMarZel},
there are natural analogs of the Grigorchuk and Gupta-Sidki groups
in the world of {\it Lie  superalgebras} of {\it arbitrary characteristics}~\cite{Pe16},
where two Lie superalgebras were constructed,
the most interesting cases being those of characteristic zero,
see further examples in~\cite{PeOtto,PeSh18FracPJ}.
In all these examples (actually, four examples), $\ad a$ is nilpotent,
$a$ being an even or odd element with respect to the corresponding $\Z_2$-gradings.
This property is an analogue of the periodicity of the Grigorchuk and Gupta-Sidki groups.
The second Lie superalgebra $\QQ$ in~\cite{Pe16} has a natural fine $\Z^3$-gradation
with at most one-dimensional components.
In particular, $\QQ$ is a nil finely $\Z^3$-graded Lie superalgebra, which shows
that an extension of Theorem~\ref{TMarZel}
for the Lie {\it super}algebras of characteristic zero is not valid.
\medskip

\subsection{Examples of nil graded Lie and Jordan superalgebras of finite width}
The Jordan superalgebra $\KK$ constructed in~\cite{PeSh18FracPJ} shows
that an extension of Theorem~\ref{TZelmanov} for the {\it Jordan superalgebras}, characteristic being not 2, is not valid.
The goal of the present section is to provide
a similar but "smaller" example, namely, a nil graded Jordan superalgebra $\JJ$ of {\it finite width} (Theorem~\ref{TJOtto}).
These facts resemble those for Lie algebras and superalgebras mentioned above.
\medskip

Both Lie superalgebras of~\cite{Pe16} and the Lie superalgebra of~\cite{PeSh18FracPJ} are of infinite width.
Now, we shall use the Lie superalgebra of finite width constructed in~\cite{PeOtto}.

Let $\Lambda=\Lambda(x_i\vert i\geq 0)$ be the Grassmann algebra.
The Grassmann variables and respective superderivatives
$\lbrace x_i,\dd_i\mid i\geq 0\rbrace$ are odd elements of the superalgebra $\End\Lambda$.
Define so called {\it pivot elements}:
\begin{equation}\label{pivot}
v_i=\dd_i+x_{i}x_{i+1}(\dd_{i+2}+x_{i+2}x_{i+3}(\dd_{i+4}+x_{i+4}x_{i+5}(\dd_{i+6}+\ldots)))\in\Der\Lambda,\qquad i\geq 0.
\end{equation}
Define the {\em shift} mappings:
\begin{equation*}
\tau(x_i)=x_{i+1},\quad \tau(\dd_i)=\dd_{i+1},\quad \tau(v_i)=v_{i+1},\qquad i\ge 0.
\end{equation*}

Consider the Lie superalgebra $\RR=\Lie(v_0,v_1)\subset\Der\Lambda$ and its associative hull
$\AA=\Alg(v_0,v_1)\subset \End \Lambda$.
We formulate their main properties, for more details see the original paper~\cite{PeOtto}.

\begin{Theorem}[{O. de Morais Costa, V. Petrogradsky~\cite{PeOtto}}]\label{TOtto}
Consider the Lie superalgebra $\RR=\Lie(v_0,v_1)$ and its associative hull $\AA=\Alg(v_0,v_1)$, where $\ch K\ne 2$.   Then
\begin{enumerate}
\renewcommand{\theenumi}{\roman{enumi}}
\item $\RR$ has a monomial basis consisting of standard monomials of two types.
\item $\RR$ and $\AA$ are $\Z^2$-graded by multidegree in the generators.
      Also, $\RR$ has the degree $\N$-gradation,
      which components are isomorphic to the factors of the lower central series.
\item We put the basis monomials of $\RR$ and $\AA$ at lattice points of plane $\Z^2\subset\R^2$ using the multidegree.
      These monomials are in regions of plane bounded by pairs of logarithmic curves.
\item The components of the $\Z^2$-grading of $\RR$ are at most one-dimensional,
      thus, the $\Z^2$-grading of $\RR$ is fine.
\item $\GKdim\RR=\LGKdim\RR=1$, moreover, $\RR$ has a linear growth and the growth function satisfies
      $\gamma_\RR(m)\approx 3m$ as $m\to\infty$.
\item Moreover, $\RR$ is of finite width 4. Namely,
      let $\RR=\mathop{\oplus}\limits_{n=1}^\infty\RR_n$ be the $\N$-grading by degree in the generators,
      where $\RR_n\cong \RR^n/\RR^{n+1}$, $n\ge 1$, are the lower central series factors (see (ii)).
      Then  the coefficients $(\dim\RR_n| n\ge 1)$, are $\{2,3,4\}$.
      This sequence is non-periodic.
\item $\GKdim\AA=\LGKdim\AA=2$.
\item 
      Homogeneous elements of the grading $\mathbf{R}=\mathbf{R}_{\bar 0}\oplus\mathbf{R}_{\bar 1}$ are $\ad$-nilpotent.
\item $\RR$ is just infinite  but not hereditary just infinite.
\item $\RR$ again shows that an extension of~Theorem~\ref{TMarZel} (Martinez and Zelmanov~\cite{MaZe99})
      to the Lie superalgebras of characteristic zero is not valid.
\end{enumerate}
\end{Theorem}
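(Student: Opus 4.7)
The central idea is to exploit the self-similar form of the pivot elements: rewriting~\eqref{pivot} yields the recursion
\begin{equation*}
v_i=\dd_i+x_ix_{i+1}v_{i+2},\qquad i\ge 0,
\end{equation*}
which drives essentially every claim in the theorem. First I would use this recursion to push any Lie monomial in $v_0,v_1$ into a canonical form: iterated commutators can be rewritten as a Grassmann monomial in the $x_i$ multiplied on the right by a single tail which is either some $\dd_j$ or some pivot $v_k$. Tracking parities (both generators are odd) and systematically applying the super-Jacobi identity, one extracts two distinguished families of standard monomials and shows they span $\RR$; linear independence is checked by evaluating on a separating set of monomials in $\Lambda$. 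This proves (i), and simultaneously identifies a monomial basis of the associative hull $\AA$ as products of two Lie-standard monomials.

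The $\Z^2$-grading in (ii) is immediate once $v_0, v_1$ are assigned bidegrees $(1,0)$ and $(0,1)$: the bracket and the action on $\Lambda$ preserve the grading, and the $\NO$-gradation by total degree agrees with the lower central series filtration because $\RR$ is generated in degree one. For (iii) and (iv), I would place each standard monomial at its multidegree $(a,b)\in\Z^2$; the recursion forces the minimal index of $x_i$ appearing in a standard monomial of total degree $n$ to grow only logarithmically in $n$, which confines the occupied lattice points to a region bounded by two logarithmic curves. One-dimensionality of the $\Z^2$-components then reduces to a bijection between standard monomials and lattice points inside this region.

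For (v)--(vii), one counts standard monomials: the number of $\Z^2$-points on an anti-diagonal $a+b=n$ inside the described region lies in $\{2,3,4\}$ by direct enumeration of the self-similar pattern, and summing over $n$ gives $\gamma_\RR(m)\approx 3m$, while non-periodicity of $(\dim\RR_n)$ is inherited from the aperiodic logarithmic boundary. The bound $\GKdim\AA=2$ comes from multiplying two Lie-standard forms and counting bidegrees of the resulting region in $\Z^2$. Statement (viii) on $\ad$-nilpotency of homogeneous elements follows by induction on a depth parameter attached to the self-similar filtration, since $\ad v_i$ strictly reduces that depth when applied to a homogeneous element. For (ix), just-infiniteness follows from a density argument: the shift $\tau$ embeds $\RR$ into itself, and iterated $\ad$-action on any nonzero element of a nontrivial ideal produces elements of arbitrarily large shift, whence the ideal has finite codimension; failure of hereditary just infinite is then witnessed by an explicit proper ideal that itself carries an infinite-codimension subideal. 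Finally (x) is a corollary of (iv), (v) and (viii).

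The main obstacle is the combinatorial bookkeeping of standard monomials and their multidegrees. Proving width exactly $4$ with admissible dimension values $\{2,3,4\}$, and establishing non-periodicity, requires the sharpest version of the monomial count together with a precise description of the logarithmic boundary. Once this standard-form theorem and its lattice-point picture are in place, the analytic and structural consequences (growth, GK-dimension, $\ad$-nilpotency, and just-infiniteness) follow by relatively short arguments built on the self-similar embedding.
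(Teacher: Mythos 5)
This theorem is not proved in the present paper at all: it is quoted verbatim from the cited work of de Morais Costa and Petrogradsky \cite{PeOtto}, and the authors explicitly defer to that paper for all details. So there is no internal proof to compare your attempt against; what you have written must stand on its own, and as it stands it is a plan rather than a proof. Your strategic outline is consistent with what the statement itself advertises (a standard-monomial basis, the recursion $v_i=\dd_i+x_ix_{i+1}v_{i+2}$ extracted from \eqref{pivot}, the lattice-point picture bounded by logarithmic curves), but every load-bearing step is deferred. You never define the two types of standard monomials, so claim (i) --- on which everything else rests --- is not established; ``linear independence is checked by evaluating on a separating set of monomials in $\Lambda$'' names a method without exhibiting the separating set or the evaluation; the width computation in (vi) is asserted to follow ``by direct enumeration of the self-similar pattern'' without the enumeration; and non-periodicity is attributed to ``the aperiodic logarithmic boundary,'' which is not an argument. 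Note that the present paper itself remarks that non-periodicity was actually proved in \cite{PeOtto} only for $p=2$, with the other characteristics left as ``similar'' --- a sign that this point is genuinely delicate and cannot be waved through.

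Two further steps deserve concrete scrutiny. For (viii), $\ad$-nilpotency of homogeneous elements is typically the hardest part of such constructions; ``induction on a depth parameter'' which ``$\ad v_i$ strictly reduces'' is not obviously coherent, since $\ad v_i$ applied to a monomial supported far down the tail need not reduce any natural depth --- in \cite{PeOtto} this requires explicit computation of powers of $\ad$ on the monomial basis, using identities such as $\{v_n,v_n\}=x_{n+1}v_{n+2}$ (which the present paper does use elsewhere). For (ix), your ``density argument'' for just-infiniteness is the right flavor, but the failure of hereditary just-infiniteness requires producing a specific finite-codimension ideal containing an infinite-codimension ideal, which you only promise. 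In short: the skeleton is plausible and matches the architecture of the cited proof, but the proposal contains none of the combinatorial content that constitutes the actual proof in \cite{PeOtto}.
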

\begin{Remark}
The first counterexample of a nil finely $\Z^3$-graded Lie superalgebra
of slow polynomial growth in any characteristic was suggested before (the second Lie superalgebra $\QQ$ in~\cite{Pe16}).
The virtue of the nil finely $\Z^2$-graded Lie superalgebra $\RR$ above
is that it is of linear growth, moreover, of finite width 4, and just infinite.
Claim~(vi)  is analogous to the fact that the Grigorchuk group is of finite width~\cite{Rozh96,BaGr00}.
Thus, $\RR$ is a "more appropriate" analogue of the Grigorchuk group than the second Lie superalgebra $\QQ$ of~\cite{Pe16}
or the Lie superalgebra considered recently in~\cite{PeSh18FracPJ}, both being of infinite width
(because their Gelfand-Kirillov dimensions are bigger than 1).
\end{Remark}

Actually, the non-periodicity of the sequence above (claim~(vi)) was proved only in case $p=2$~\cite{PeOtto},
but the proof of non-periodicity for other characteristics is similar.
\medskip

Now, we construct the following Jordan superalgebra and describe its properties.
We use the Lie superalgebra $\RR$ described above and take its Jordan double $\JJ=\Jor(\RR)$.

\begin{Theorem}\label{TJOtto}
Let $\ch K\ne 2$ and $\RR=\Lie(v_0,v_1)$ be the Lie superalgebra of Theorem~\ref{TOtto}.
Consider the Jordan superalgebra $\JJ=\Jor(\RR)$ and its subalgebra without unit $\JJ^o$.
They have the following properties.
\begin{enumerate}
\renewcommand{\theenumi}{\roman{enumi}}
\item $\JJ$ is $\Z^3$-graded by multidegree in $\bar X=\{v_0,v_1,\bar 1\}$.
\item We put monomials of $\JJ$ at lattice points of plane $\Z^2\subset\R^2$ using the partial multidegree in $\{v_0,v_1\}$.
      These monomials are  bounded by a pair of logarithmic curves.
\item The components of the $\Z^3$-grading of $\JJ$ are at most one-dimensional.
\item $\GKdim \JJ=\LGKdim\JJ=1$, moreover, $\JJ$ is of linear growth and
      $\gamma_\JJ(\bar X,m)\approx 2m$, as $m\to\infty$.
\item Let $\JJ=\mathop{\oplus}\limits_{n=0}^\infty\JJ_n$ be the $\NO$-grading by degree in $\bar X$.
      Then $\JJ$ is of finite width 4, and
      the coefficients $(\dim\JJ_n| n\ge 1)$ are $\{0,2,3,4\}$, where the trivial components are $\JJ_{3m}=0$, $m\ge 1$.
      This sequence is non-periodic.
\item $\JJ$ is just infinite  but not hereditary just infinite.
\item $(a^2)^2=0$ for any $a\in\JJ^o$.
\item Let $a\in\JJ^o$ and $a\in J_{n,m,k}$, $(n,m,k)\in\Z^3$, then $a^2a=aa^2=0$. Thus, $\JJ^o$ is nil $\Z^3$-graded.
\item $\JJ$ again shows that an extension of~Theorem~\ref{TZelmanov} of Zelmanov~\cite{Zelmanov}
      to the Jordan superalgebras of characteristic not 2 is not valid.
\end{enumerate}
\end{Theorem}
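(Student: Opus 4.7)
The plan is to transfer the properties of $\RR$ from Theorem~\ref{TOtto} to $\JJ$ via the general machinery of Lemma~\ref{LJor}, Corollary~\ref{Ccomp}, and Lemma~\ref{Ljust-inf-Jor}. The arguments for (i)--(v) are essentially bookkeeping. The $\Z^2$-grading of $\RR$ in $\{v_0,v_1\}$ upgrades to a $\Z^3$-grading of $\JJ$ in $\bar X=\{v_0,v_1,\bar 1\}$ by Lemma~\ref{LJor}(iii), and formula~\eqref{jnk} expresses each $\Z^3$-component of $\JJ$ either as $K\cdot 1$, $K\cdot \bar 1$, or as (a copy of) a single $\Z^2$-component of $\RR$; since those are at most one-dimensional by Theorem~\ref{TOtto}(iv), so are the components of $\JJ$, giving (i) and (iii). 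Claim (ii) follows because the logarithmic envelope of Theorem~\ref{TOtto}(iii) is recorded on the $\{v_0,v_1\}$-plane, which is exactly the partial multidegree preserved by $\Jor$. Claim (iv) follows from Corollary~\ref{Ccomp}(iv) with $r=1$ and $C=3$ (since $\gamma_\RR(m)\approx 3m$ by Theorem~\ref{TOtto}(v)), giving $\gamma_\JJ(\bar X,m)\approx 2\cdot 3\cdot(m/3)=2m$. For (v), Lemma~\ref{LJor}(ii) yields $\JJ_{3m}=0$ for $m\ge 1$, $\JJ_{3m-2}=\RR_m$ (for $m\ge 2$), $\JJ_{3m-1}=\bar\RR_m$, and $\JJ_1=\RR_1\oplus K\bar 1$; feeding in $\dim\RR_m\in\{2,3,4\}$ from Theorem~\ref{TOtto}(vi) produces $\dim\JJ_n\in\{0,2,3,4\}$, and the non-periodicity of $(\dim\RR_m)$ transfers immediately to $(\dim\JJ_n)$.

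For (vi), $\JJ$ is just infinite by Lemma~\ref{Ljust-inf-Jor}(i) applied to the just infinite Lie superalgebra $\RR$ (Theorem~\ref{TOtto}(ix)). For the non-hereditary part I would pick a nonzero ideal $I\triangleleft\RR$ of finite codimension that is not just infinite (available since $\RR$ is not hereditary just infinite by Theorem~\ref{TOtto}(ix)) and a nonzero $I'\triangleleft I$ of infinite codimension in $I$, and form $H=I\oplus\bar I\subset\JJ$. A direct check using~\eqref{defJor} shows $H$ is an ideal of $\JJ$ of finite codimension: the only nontrivial internal product is $\bar I\bullet\bar I=[I,I]\subset I$, and the external closures $\bar 1\bullet I=\bar I\subset H$ and $\bar\RR\bullet\bar I=[\RR,I]\subset I\subset H$ hold, the remaining products being zero. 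Then $H'=I'\oplus\overline{I'}$ is, by the same reasoning, a nonzero ideal of $H$ of infinite codimension (since $\bar I\bullet\overline{I'}=[I,I']\subset I'$), witnessing that $H$ is not just infinite. Claim (vii) is exactly Lemma~\ref{Ljust-inf-Jor}(iii).

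The substantive step is (viii). I fix a homogeneous $a\in\JJ^o$ in a single $\Z^3$-component and split into three cases dictated by~\eqref{jnk}. If $a\in L$ (components $\JJ_{3m-2}$ with $m\ge 2$, or the $\RR_1$-part of $\JJ_1$), then $a^2=0$ directly because $L\bullet L=0$ in $\Jor(L)$ by~\eqref{defJor}. If the component is $(0,0,1)$ then $a=\alpha\bar 1$ and $\bar 1\bullet\bar 1=\{1,1\}=0$ in $P(\RR)$ by the Leibniz rule, so again $a^2=0$. The remaining case is $a=\bar x$ with $x\in\RR_n$ homogeneous; here $a^2=\pm[x,x]$, which vanishes for $x$ even by super-anticommutativity, and otherwise lies in $L\subset\JJ$, so both $a^2\bullet a$ and $a\bullet a^2$ are zero because $L\bullet\bar L=0$ in $\Jor(L)$. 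Hence $a^2a=aa^2=0$ in all cases and $\JJ^o$ is nil $\Z^3$-graded. Claim (ix) is then immediate from (iii) and (viii) combined with $\ch K\ne 2$. The point requiring the most care is the case $a=\bar x$ with $x$ odd, where $a^2$ is genuinely nonzero; the argument closes only because the Jordan product between the $L$- and $\bar L$-parts of $\Jor(L)$ is trivial, so one further multiplication annihilates it.
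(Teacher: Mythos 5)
Your proposal is correct, and for claims (i)--(v), (vii)--(ix) it follows essentially the same route as the paper: transfer via Lemma~\ref{LJor}, Corollary~\ref{Ccomp} (your numerical check $2\cdot 3\cdot(m/3)=2m$ is exactly what the paper intends), and Lemma~\ref{Ljust-inf-Jor}; for (viii) the paper likewise reduces everything to ``$a^2\in L$ and $L\bullet\bar L=0$'' by~\eqref{defJor}, though it is more concrete, pointing out that the only odd homogeneous elements of $\bar\RR$ with nonzero square are the pivots, $\bar v_n\bullet\bar v_n=\{v_n,v_n\}=x_{n+1}v_{n+2}$; your version is marginally more general since it never uses the specific structure of $\RR$. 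The genuine divergence is in (vi): the paper does not prove the ``not hereditary just infinite'' part at all but cites \cite[Theorem~13.4]{PeSh18FracPJ}, whereas you give a self-contained argument, lifting a witness $I'\triangleleft I\triangleleft\RR$ to $H'=I'\oplus\overline{I'}\subset H=I\oplus\bar I\subset\JJ$. Your ideal verifications are right (the only nonzero internal product is $\bar I\bullet\bar I=[I,I]\subset I$, and $\bar 1\bullet I=\bar I$, $\bar\RR\bullet\bar I=[\RR,I]\subset I$ give closure under $\JJ$), and $H$ has finite codimension $2+2\operatorname{codim}_\RR I$ while $H'$ is a nonzero ideal of $H$ of infinite codimension. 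This buys a proof that does not depend on the external reference, at the cost of a page of routine checking that the paper avoids.
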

\begin{proof}
Almost all statements follow from the properties of $\RR$ described in Theorem~\ref{TOtto}
by applying Lemma~\ref{Ljust-inf-Jor},  Lemma~\ref{LJor},  and Corollary~\ref{Ccomp}.
The fact that $\JJ$ is not hereditary just infinite is proved as the same property of the second Jordan superalgebra $\KK$
in~\cite[Theorem~13.4]{PeSh18FracPJ}.
Let $a\in J_{n,m,k}$, recall that the latter component is one-dimensional.
If $a\in L$ then $a^2=0$. Let $a\in \bar L$,  then the square is again zero except the pivot elements, namely,
let $a=\bar v_n$ then $a^2=\bar v_n\bullet \bar v_n=\{v_n,v_n\}=x_{n+1}v_{n+2}\in L$  (see~\cite{PeOtto})
and  $a^2a=aa^2=0$ by~\eqref{defJor}.
\end{proof}
\begin{Remark}
A similar example of a just-infinite nil finely $\Z^4$-graded Jordan superalgebra
of slow polynomial growth  was suggested before, see the second Jordan superalgebra $\KK$ in~\cite{PeSh18FracPJ}.
But, the present example is a more "appropriate analogue" of the Grigorchuk group,
because the present Jordan superalgebra $\JJ$ is of linear growth, moreover, of finite width 4.
This property resembles the finite width of the Grigorchuk group.
\end{Remark}

\begin{Remark}
The example $\JJ$ above shows that just infinite $\Z$-graded Jordan superalgebras of finite width can have
a complicated structure unlike the classification of such simple algebras over an algebraically closed field of characteristic zero~\cite{KacMarZel01}.
\end{Remark}
\section{On self-similarity of superalgebras}\label{Sself-similar}

\subsection{Self-similarity of Lie superalgebras}
We say that an algebra is {\it fractal} provided that it contains infinitely many copies of itself.
In this section we discuss the notion of self-similarity for our superalgebras.
The notion of self-similarity plays an important role in group theory~\cite{Grigorchuk00horizons,Nekr05}.
The Fibonacci Lie algebra introduced by the first author is "self-similar"~\cite{PeSh09} but not in terms of
the definition of self-similarity given by Bartholdi~\cite{Bartholdi15}.
Namely, a Lie algebra $L$ is called {\it self-similar} if it affords a homomorphism~\cite{Bartholdi15}:
\begin{equation}\label{selfQ}
\psi:L\rightarrow\Der A\rightthreetimes (A\otimes L),
\end{equation}
where $A$ is a commutative algebra,
$\Der A$ the Lie algebra of derivations, the product of the right hand side is defined via
the natural action of $\Der A$ on $A$.
The first author constructed a family of two-generated restricted Lie algebras with a nil $p$-mapping determined by two infinite sequences,
if these sequences are periodic we get self-similar restricted Lie algebras~\cite{Pe17}.
Recently, self-similar Lie algebras were studied in~\cite{FutKochSid}.

This definition easily extends to Lie superalgebras by
setting $A$ to be a supercommutative associative superalgebra and $\Der A$ the Lie superalgebra of superderivations.
We have two original examples of ad-nil self-similar Lie superalgebras of slow polynomial growth over an arbitrary field~\cite{Pe16}.

Recall the construction of the second Lie superalgebra of~\cite{Pe16}.
Let $\ch K\ne 2$ and $\Lambda=\Lambda[x_i,y_i,z_i| i\ge 0]$ the Grassmann superalgebra,
denote the respective partial superderivatives as  $\{\partial_{x_i},\partial_{y_i},\partial_{z_i}| i\ge 0\}$.
Define series of elements, called the {\it pivot elements}, of the Lie superalgebra of superderivations $\Der\Lambda$:
\begin{equation}\label{aibici0}
\begin{split}
a_i &= \partial_{x_i} + y_ix_i(\partial_{x_{i+1}}+ y_{i+1}x_{i+1}(\partial_{x_{i+2}} +y_{i+2}x_{i+2}(\partial_{x_{i+3}}+ \cdots  ))),\\
b_i &= \partial_{y_i} + z_iy_i(\partial_{y_{i+1}}+ z_{i+1}y_{i+1}(\partial_{y_{i+2}} +z_{i+2}y_{i+2}(\partial_{y_{i+3}}+ \cdots  ))),\\
c_i &= \partial_{z_i} + x_iz_i(\partial_{z_{i+1}}+ x_{i+1}z_{i+1}(\partial_{z_{i+2}} +x_{i+2}z_{i+2}(\partial_{z_{i+3}}+ \cdots  ))),
\end{split}
\qquad i\ge 0.
\end{equation}
Define the {\em shift} mapping $\tau:\Lambda\to \Lambda$ and its natural extensions to the elements defined above:
\begin{align*}
\begin{split}
\tau(x_i)&=x_{i+1}, \quad\  \tau(y_i)=y_{i+1},\quad \quad \tau(z_i)=z_{i+1},\\
\tau(\partial_{x_i})&=\partial_{x_{i+1}}, \quad \tau(\partial_{y_i})=\partial_{y_{i+1}},\quad \tau(\partial_{z_i})=\partial_{z_{i+1}},\\
\tau(a_i)&=a_{i+1}, \quad \ \tau(b_i)=b_{i+1},\quad \quad \tau(c_i)=c_{i+1},\\
\end{split}
\qquad\quad i\ge 0.
\end{align*}

Define the Lie superalgebra $\QQ=\Lie(a_0,b_0,c_0)\subset \Der\Lambda$
and its associative hull $\AA=\Alg(a_0,b_0,c_0)\subset \End \Lambda$.
For more details see the original paper~\cite{Pe16}. We observe only the following.

\begin{Lemma} The Lie superalgebra $\QQ=\Lie(a_0,b_0,c_0)$ defined above is self-similar with a natural self-similarity embedding:
$$
\psi:\QQ\hookrightarrow \langle \partial_{x_0},\partial_{y_0},\partial_{z_0}\rangle_K\rightthreetimes \Lambda[x_0,y_0,z_0]\otimes \tau(\QQ).
$$
\end{Lemma}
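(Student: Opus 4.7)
The plan is to read off the self-similarity homomorphism from the explicit recursive structure of the pivot elements. Unfolding the definitions~\eqref{aibici0} one level, we have the identities
\begin{align*}
a_0 &= \partial_{x_0} + y_0 x_0 \cdot \tau(a_0), \\
b_0 &= \partial_{y_0} + z_0 y_0 \cdot \tau(b_0), \\
c_0 &= \partial_{z_0} + x_0 z_0 \cdot \tau(c_0),
\end{align*}
inside $\Der\Lambda$, which suggests defining $\psi(a_0)=\partial_{x_0}+y_0 x_0\otimes \tau(a_0)$, $\psi(b_0)=\partial_{y_0}+z_0 y_0\otimes \tau(b_0)$, and $\psi(c_0)=\partial_{z_0}+x_0 z_0\otimes \tau(c_0)$, and extending via the Lie superalgebra structure. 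For brevity write $V:=\langle\partial_{x_0},\partial_{y_0},\partial_{z_0}\rangle_K$ and $A:=\Lambda[x_0,y_0,z_0]$.

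To show that this extends to a well-defined Lie superalgebra homomorphism, I would introduce the evaluation map
$$\Phi:V\rightthreetimes (A\otimes \tau(\QQ))\longrightarrow\Der\Lambda,\quad (\delta,\,a\otimes d)\longmapsto \delta+a\cdot d,$$
where $\delta$ and $d$ act on $\Lambda=A\otimes \tau(\Lambda)$ by trivial extension to the respective second tensor factor. The central step is to verify that $\Phi$ is a homomorphism of Lie superalgebras; this is a direct super-Leibniz computation which uses only that $V$ is abelian (the partial superderivatives anticommute as operators) and that the action of $V$ on $A\otimes\tau(\QQ)$ factors through the first tensor factor.

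By construction $\Phi(\psi(a_0))=a_0$, $\Phi(\psi(b_0))=b_0$, $\Phi(\psi(c_0))=c_0$, so on the free Lie superalgebra $F=L(a_0,b_0,c_0)$ the composition $\Phi\circ\tilde\psi$ coincides with the canonical surjection $F\twoheadrightarrow\QQ$, where $\tilde\psi$ is the extension of $\psi$ from generators to all of $F$. To promote $\tilde\psi$ to a map on $\QQ$ and simultaneously establish injectivity, I would check that $\Phi$ is injective on the image of $\tilde\psi$; this reduces to the direct sum decomposition $V+A\cdot\tau(\QQ)=V\oplus A\cdot\tau(\QQ)$ inside $\Der\Lambda$ (the two summands act on disjoint sets of Grassmann variables) together with the bijectivity of the multiplication map $A\otimes \tau(\QQ)\to A\cdot\tau(\QQ)$, which holds because $\tau(\QQ)$ acts faithfully on $\tau(\Lambda)$.

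The principal obstacle is the super-sign bookkeeping in showing that $\Phi$ respects the bracket: one must carefully track parities of $\delta\in V$ (odd), $a\in A$, and $d\in\tau(\QQ)$, and verify that the semidirect-product bracket
$$[(\delta_1,\,a_1\otimes d_1),(\delta_2,\,a_2\otimes d_2)]=\bigl(0,\ \delta_1(a_2)\otimes d_2-(-1)^{|\delta_2|(|a_1|+|d_1|)}\delta_2(a_1)\otimes d_1+(-1)^{|d_1||a_2|}a_1 a_2\otimes[d_1,d_2]\bigr)$$
matches the operator bracket in $\Der\Lambda$. This is routine but lengthy, with no essential difficulty beyond careful signs, and mirrors the analogous well-known fact for derivations of a tensor product of supercommutative algebras.
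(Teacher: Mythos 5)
Your proof is correct and takes essentially the same route as the paper's: both rest on the one-step unfolding of the pivot elements into the recursion $a_0=\partial_{x_0}+y_0x_0\tau(a_0)$, etc., together with the observation that it suffices to specify $\psi$ on the generators and extend. The paper's own proof is only a two-line sketch of this; your evaluation map $\Phi$, the verification that it respects the bracket, and the injectivity argument via the decomposition $V\oplus A\cdot\tau(\QQ)$ and the faithfulness of $\tau(\QQ)$ on $\tau(\Lambda)$ supply precisely the details the paper leaves implicit.
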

\begin{proof}
By~\eqref{aibici0} we obtain  a recursive presentation:
\begin{equation}\label{recursiveQ}
\begin{split}
a_0 &= \partial_{x_0} + y_0x_0 \tau(a_0),\\
b_0 &= \partial_{y_0} + z_0y_0 \tau(b_0),\\
c_0 &= \partial_{z_0} + x_0z_0 \tau(c_0).
\end{split}
\end{equation}
Observe that it is sufficient to find a desired presentation in the form~\eqref{selfQ} for the generators.
Indeed, we extend presentation~\eqref{recursiveQ} to an arbitrary $a\in\QQ=\Lie(a_0,b_0,c_0)$, using that $\tau:\QQ\to\QQ$ is a shift monomorphism.
\end{proof}

\begin{Conjecture}
Let $\RR=\Lie(v_0,v_1)$ be the Lie superalgebra, where $v_0,v_1$ are defined by~\eqref{pivot} (i.e. the example of~\cite{PeOtto}).
Then $\RR$ is not self-similar.
We conjecture that the Lie superalgebra of~\cite{PeSh18FracPJ} is not self-similar as well.
\end{Conjecture}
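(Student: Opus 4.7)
The plan is to proceed by contradiction: assume that there exist a supercommutative associative superalgebra $A$ and an injective homomorphism
$$
\psi:\RR\hookrightarrow\Der A\rightthreetimes(A\otimes\RR),
$$
and derive a contradiction from the rigid structural features of $\RR$ collected in Theorem~\ref{TOtto}. The three features I intend to exploit are the fine $\Z^2$-grading with at most one-dimensional components, the finite width $4$ with linear growth, and, crucially, the non-periodicity of the sequence $(\dim\RR_n)$.

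First I would reduce the problem to data on the two generators. Write
$$
\psi(v_i)=D_i+\sum_j a_{ij}\otimes u_{ij},\qquad i\in\{0,1\},\quad D_i\in\Der A,\ a_{ij}\in A,\ u_{ij}\in\RR.
$$
Since $\RR=\Lie(v_0,v_1)$ this determines $\psi$ on the whole algebra, and injectivity becomes the requirement that every bracket relation of $\RR$ be matched in $\Der A\rightthreetimes(A\otimes\RR)$ without extra identifications. Replacing $A$ by the subalgebra generated by the coefficients that appear in the image of $\psi$, one may assume $A$ is countably generated and carries a compatible multigrading. Each one-dimensional component $\RR_{(n,m)}$ must then map to a homogeneous piece of $\Der A\rightthreetimes(A\otimes\RR)$, which severely restricts both the coefficients $a_{ij}$ and the elements $u_{ij}$: finite width forces only finitely many pairs $(a_{ij},u_{ij})$ to contribute in any given multidegree.

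The heart of the argument, and where I expect the main obstacle to lie, is to convert self-similarity into a recursion on $d_n:=\dim\RR_n$ and contradict its non-periodicity. Heuristically, iterating $\psi$ inside $A\otimes\RR$ nests copies of $\RR$ into itself, and carefully bookkeeping these copies together with their $A$-coefficients should produce a self-affine identity of the form
$$
\sum_{n\ge 1}d_n t^n=P(t)+Q(t)\sum_{n\ge 1}d_n t^{cn}
$$
for some polynomials $P,Q$ and an integer $c\ge 2$, or a more general $D$-finite relation. Standard generating-function arguments would then force $(d_n)$ to be eventually periodic, contradicting Theorem~\ref{TOtto}(vi). The technical difficulty is to produce such a recursion honestly: the $\Der A$ summand contributes correction terms that can destroy a clean self-affine identity, and allowing $A$ to be an arbitrary supercommutative superalgebra grants considerable flexibility in how the coefficients $a_{ij}$ can be distributed.

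As a warm-up and sanity check I would rule out the natural candidate suggested by the defining recursion $v_0=\dd_0+x_0x_1v_2$, $v_1=\dd_1+x_1x_2v_3$. This candidate would take values in $\Der A\rightthreetimes(A\otimes\tau(\RR))$, but the shift $\tau$ does \emph{not} preserve $\RR=\Lie(v_0,v_1)$ because $v_2,v_3\notin\RR$ — precisely the difference with $\QQ$, where all three pivot elements are generators. Excluding this obvious candidate is straightforward; excluding every exotic choice of $A$ and $\psi$ is the genuine difficulty and the reason the authors leave the statement as a conjecture.
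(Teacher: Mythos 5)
This statement is left as a conjecture in the paper: the authors give no proof, only the heuristic you reproduce as your ``warm-up'' --- namely that the obvious candidate embedding coming from $v_0=\dd_0+x_0x_1v_2$ fails because $x_1v_2=\tau(x_0v_1)$ with $x_0v_1\notin\RR$, so the shift $\tau$ does not map $\RR$ into itself the way it maps $\QQ$ into itself. Your observation on that point agrees with the paper. The rest of your proposal, however, is a plan rather than a proof, and its central mechanism does not work even in principle.

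The genuine gap is the claim that a self-affine identity of the form $\sum_n d_nt^n=P(t)+Q(t)\sum_n d_nt^{cn}$ would force $(d_n)$ to be eventually periodic. Functional equations of exactly this Mahler type are what generating functions of $c$-automatic sequences satisfy, and such sequences (Thue--Morse being the standard example) are typically \emph{not} eventually periodic. Worse for your strategy, the dimension sequence of $\RR$ in~\cite{PeOtto} is itself governed by binary expansions and is non-periodic precisely because it is of this automatic flavour; self-similar algebras are the natural habitat of such sequences. So non-periodicity of $(\dim\RR_n)$ cannot serve as the obstruction to self-similarity --- if anything it is weak evidence in the opposite direction --- and the contradiction you aim for would never materialize. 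Any honest attack on the conjecture has to engage with the algebraic structure (e.g., showing that no supercommutative $A$ and no splitting of the Grassmann variables can realize~\eqref{selfQ} compatibly with the fine $\Z^2$-grading and the specific brackets of the standard monomials), which is exactly the part you, like the authors, leave open.
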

Indeed, recall that $\RR$ is fractal.
But, a self-similarity embedding for $\RR$ might look like:
$$\psi:\RR\hookrightarrow \langle \dd_0\rangle_K\rightthreetimes \Lambda(x_0)\otimes \tau(\RR).$$
Recall that $\RR$ is generated by $\{v_0,v_1\}$,
where the second generator is of the required form $v_1=\tau(v_0)\in \tau(\RR)$.
By~\eqref{pivot}, one has $v_0=\dd_0+x_0\cdot x_1v_2$ where $x_1v_2\notin\tau(\RR)$.
Indeed, $x_1v_2=\tau(x_0v_1)$ where $x_0v_1\notin \RR$ (see~\cite{PeOtto}).
It seems that we cannot split our variables in any similar  way.

\subsection{Self-similarity of associative superalgebras}
Similar to the case of associative algebras~\cite{Bartholdi06,Sidki09,PeSh13ass},
we say that an associative superalgebra $A$ is {\it self-similar} provided that  there exists
a {\em self-similarity embedding}:
\begin{equation}\label{selfA}
\psi:A \hookrightarrow  \M(n|m)\otimes A,
\end{equation}
for some matrix superalgebra $\M(n|m)$, the tensor product being that of associative superalgebras.
\begin{Lemma} The associative superalgebra $\AA=\Alg(a_0,b_0,c_0)$ defined above~\eqref{aibici0}
is self-similar with a self-similarity embedding:
$$
\psi:\AA\hookrightarrow   \M(4|4) \otimes \AA.
$$
\end{Lemma}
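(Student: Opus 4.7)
The plan is to use the natural tensor decomposition of the Grassmann superalgebra
\[
\Lambda \;=\; \Lambda[x_0, y_0, z_0] \otimes \tau(\Lambda)
\]
and to read off the self-similarity embedding directly from the recursive presentation~\eqref{aibici0} of the generators of $\AA$ (in the spirit of~\eqref{recursiveQ}).

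The first step is to identify the $\M(4|4)$ factor. The truncated Grassmann superalgebra $\Lambda[x_0, y_0, z_0]$ has total dimension $8$, with even part spanned by $\{1,\, x_0 y_0,\, y_0 z_0,\, x_0 z_0\}$ and odd part spanned by $\{x_0,\, y_0,\, z_0,\, x_0 y_0 z_0\}$, both of dimension $4$. Hence $\End(\Lambda[x_0, y_0, z_0]) \cong \M(4|4)$, and this superalgebra contains $\partial_{x_0}, \partial_{y_0}, \partial_{z_0}$ together with all the multiplication operators $M_{x_0 y_0}, M_{y_0 z_0}, M_{x_0 z_0}$, etc.

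Next, I would use the standard embedding of superalgebras
\[
\M(4|4) \otimes \End(\tau(\Lambda)) \;\cong\; \End(\Lambda[x_0, y_0, z_0]) \otimes \End(\tau(\Lambda)) \;\hookrightarrow\; \End(\Lambda),
\]
with multiplication governed by Kaplansky's rule. Since $\tau(a_0) = a_1$ is built from variables and superderivatives with index $\ge 1$, it annihilates $\Lambda[x_0, y_0, z_0]$ and hence acts only on the second tensor slot, while $\partial_{x_0}$ and $M_{y_0 x_0}$ act only on the first slot. Translating~\eqref{aibici0} into tensor form then yields
\begin{align*}
a_0 &\longmapsto \partial_{x_0} \otimes 1 \,+\, (y_0 x_0) \otimes \tau(a_0), \\
b_0 &\longmapsto \partial_{y_0} \otimes 1 \,+\, (z_0 y_0) \otimes \tau(b_0), \\
c_0 &\longmapsto \partial_{z_0} \otimes 1 \,+\, (x_0 z_0) \otimes \tau(c_0),
\end{align*}
with the right tensor factors lying in $\tau(\AA) \subseteq \AA$. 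Since $\AA = \Alg(a_0, b_0, c_0)$ is generated by these three elements and $\M(4|4) \otimes \tau(\AA)$ is closed under the superalgebra product, the image of $\AA$ lies inside $\M(4|4) \otimes \tau(\AA)$. Composing with the shift isomorphism $\tau : \AA \xrightarrow{\sim} \tau(\AA)$ on the second factor produces the desired $\psi : \AA \hookrightarrow \M(4|4) \otimes \AA$.

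The main obstacle is the bookkeeping of Kaplansky signs: one must check that
$(M_{y_0 x_0} \otimes a_1)(m \otimes g) = (-1)^{|a_1||m|}\, y_0 x_0 m \otimes a_1(g)$
matches $y_0 x_0 \cdot a_1(mg) = (-1)^{|m|}\, y_0 x_0 m\, a_1(g)$ via the superderivation identity $a_1(mg) = (-1)^{|m|} m\, a_1(g)$ (valid because $a_1$ annihilates $m \in \Lambda[x_0, y_0, z_0]$ and is odd). Both signs agree precisely because $\tau(a_0) = a_1$ is odd, so the tensor identification is compatible with composition in $\End(\Lambda)$. Everything else, in particular closure under multiplication and injectivity of the shift, is a routine verification.
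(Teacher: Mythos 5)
Your argument is correct and follows essentially the same route as the paper: both rest on the recursive presentation~\eqref{recursiveQ} of the generators and on identifying the ``index-zero'' factor with $\M(4|4)$. The only cosmetic difference is that you realize $\M(4|4)$ as $\End(\Lambda[x_0,y_0,z_0])$ of the $(4|4)$-dimensional superspace, whereas the paper writes it as $\Alg(\partial_{x_0},x_0,\partial_{y_0},y_0,\partial_{z_0},z_0)\cong\M(1|1)^{\otimes 3}$ --- the same algebra --- and your explicit Kaplansky-sign check is a welcome addition the paper leaves implicit.
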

\begin{proof}
As above, we use presentation~\eqref{recursiveQ} and get desired presentations for all elements of $\AA$.
Observe that $\Alg(\partial_{x_0},x_0)\cong \M(1|1)$, where $\M(1|1)$
is the superalgebra of $2\times 2$-matrices,
even part consists of diagonal matrices, and odd part consists of off-diagonal elements.
Next, we use that $\Alg(\partial_{x_0},x_0,\partial_{y_0},y_0,\partial_{z_0},z_0)\cong \M(1|1)\otimes \M(1|1)\otimes \M(1|1)\cong \M(4|4)$.
\end{proof}

\begin{Conjecture}
Consider the associative superalgebra $A=\Alg(v_0,v_1)$ corresponding to
the Lie superalgebra considered above $\RR=\Lie(v_0,v_1)$, where $v_0,v_1$ are defined by~\eqref{pivot} (i.e. the example of~\cite{PeOtto}).
Then $A$ is not self-similar.
We conjecture that the respective associative superalgebra of~\cite{PeSh18FracPJ} is not self-similar as well.
\end{Conjecture}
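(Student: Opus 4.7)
The plan is to attack the conjecture in two stages: first, to rule out all ``natural'' candidates for a self-similarity embedding $\psi : \AA \hookrightarrow \M(n|m) \otimes \AA$ coming from tensor decompositions of the ambient Grassmann superalgebra, and second, to indicate where the remaining difficulty lies.

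First I would treat the most obvious candidate. The decomposition $\Lambda = \Lambda(x_0) \otimes \tau(\Lambda)$ induces, after identifying the second factor with $\Lambda$ via $\tau^{-1}$, an embedding $\End \Lambda \hookrightarrow \M(1|1) \otimes \End \Lambda$, with $\M(1|1) \cong \Alg(x_0, \partial_0)$. Under this embedding, \eqref{pivot} gives
\begin{equation*}
v_0 \;\mapsto\; \partial_0 \otimes 1 + x_0 \otimes (x_0 v_1), \qquad v_1 \;\mapsto\; 1 \otimes v_0.
\end{equation*}
Consequently, a self-similarity embedding $\psi : \AA \hookrightarrow \M(1|1) \otimes \AA$ of this natural form exists if and only if $x_0 v_1 \in \AA = \Alg(v_0, v_1)$. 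But the monomial-basis description of \cite{PeOtto} shows that $x_0 v_1 \notin \AA$; this is the associative analogue of the fact $x_0 v_1 \notin \RR$ used right after the Lie-side Conjecture.

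Next I would rule out, by iterating the same device, the larger family of natural candidates arising from the decompositions $\Lambda = \Lambda(x_0, \ldots, x_{k-1}) \otimes \tau^k(\Lambda)$, which yield embeddings $\End \Lambda \hookrightarrow \M(2^{k-1}|2^{k-1}) \otimes \End \Lambda$ (since $\M(1|1)^{\otimes k} \cong \M(2^{k-1}|2^{k-1})$ for $k \ge 1$). Expanding $v_0, v_1$ recursively via \eqref{pivot} in each such presentation, the matrix coefficients that must appear include monomials of the form $x_0 v_1$, $x_0 x_1 v_2$, and so on, none of which lie in $\AA$ by the same monomial-basis arguments of \cite{PeOtto}; so no self-similarity embedding induced by a pure shift on the Grassmann variables can exist.

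The main obstacle, and what makes the conjecture genuinely hard, is to rule out exotic embeddings $\psi : \AA \hookrightarrow \M(n|m) \otimes \AA$ that do not arise from any shift decomposition of the ambient $\End \Lambda$. Even the Lie-side non-self-similarity of $\RR$ is, as presented in the preceding subsection, only informally justified, not formally established. A rigorous treatment would presumably require an invariant of $\AA$ insensitive to the particular embedding $\AA \subset \End \Lambda$; natural candidates are the precise asymptotic profile of the monomial basis bounded by logarithmic curves (Theorem~\ref{TOtto}~(iii)), the exact growth $\GKdim \AA = 2$ (Theorem~\ref{TOtto}~(vii)), or the ideal structure of $\AA$ inherited from the just-infinite Lie algebra $\RR$. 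Converting such combinatorial or growth-theoretic data into a genuine obstruction against arbitrary associative self-similarity appears to require new tools beyond what is developed in \cite{PeOtto} or the present paper.
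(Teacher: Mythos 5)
The statement you were given is labelled a \emph{Conjecture}, and the paper supplies no proof of it: the only supporting material anywhere nearby is the informal discussion following the Lie-superalgebra version of the conjecture, where the authors note that $v_0=\partial_0+x_0\cdot x_1v_2$ with $x_1v_2=\tau(x_0v_1)$ and $x_0v_1\notin\RR$, and conclude only that ``it seems that we cannot split our variables in any similar way.'' Your first two paragraphs are essentially a careful associative transcription and extension of that heuristic: you rule out the self-similarity embeddings induced by the tensor decompositions $\Lambda\cong\Lambda(x_0,\dots,x_{k-1})\otimes\tau^k(\Lambda)$, which is already more than the paper does for the associative case (it offers no argument at all there). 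One caveat on even this partial step: you need $x_0v_1\notin\AA$, which is strictly stronger than the statement $x_0v_1\notin\RR$ actually recorded in the papers, since the associative hull $\AA=\Alg(v_0,v_1)$ is much larger than $\RR$; this should be verified against the monomial basis of $\AA$ in \cite{PeOtto} rather than asserted by analogy with the Lie case.

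The genuine gap is the one you correctly identify yourself: a self-similarity embedding $\psi:\AA\hookrightarrow\M(n|m)\otimes\AA$ in the sense of~\eqref{selfA} is an arbitrary homomorphism of associative superalgebras, with no requirement that it be induced by a splitting of the Grassmann variables or be compatible with the given representation $\AA\subset\End\Lambda$. Excluding the shift-induced embeddings therefore does not establish the conjecture, and neither the paper nor your proposal produces an invariant capable of obstructing arbitrary embeddings. In short, your proposal is an honest partial argument that matches and slightly sharpens the paper's own heuristic, but it is not a proof, and the paper contains none either: the statement remains open.
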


\subsection{Self-similarity of Poisson superalgebras}
Let us call a Poisson superalgebra $P$ {\it self-similar} if there exist a Poisson superalgebra $H$  and an embedding:
\begin{equation}\label{selfP}
\psi:P\hookrightarrow  H\otimes P,
\end{equation}
the tensor product being that of Poisson superalgebras.

Following constructions of~\cite{PeSh18FracPJ},
let us suggest a Poisson superalgebra related to $\QQ$ above
(this was not done in the original paper~\cite{Pe16}, we introduced the Poisson and Jordan superalgebras in case of
another example in~\cite{PeSh18FracPJ}).
Consider the Grassmann superalgebra
$H=\Lambda[x_i,y_i,z_i,X_i,Y_i,Z_i| i\ge 0]$ and supply it with the Poisson bracket, which nontrivial products are
$\{X_i,x_i\}=1$, $\{Y_i,y_i\}=1$, $\{Z_i,z_i\}=1$ for all $i\ge 0$.
Thus, $H$ is turned into a Poisson superalgebra.
We formally substitute big letters instead of respective derivatives in~\eqref{aibici0}:
\begin{equation}\label{aibiciP}
\begin{split}
A_i &= X_i + y_ix_i(X_{i+1}+ y_{i+1}x_{i+1}(X_{i+2} +y_{i+2}x_{i+2}(X_{i+3}+ \cdots  ))),\\
B_i &= Y_i\, + z_iy_i(Y_{i+1}\,+ z_{i+1}y_{i+1}(Y_{i+2}\, +z_{i+2}y_{i+2}(Y_{i+3}\,+ \cdots  ))),\\
C_i &= Z_i + x_iz_i(Z_{i+1}+ x_{i+1}z_{i+1}(Z_{i+2} +x_{i+2}z_{i+2}(Z_{i+3}+ \cdots  ))),
\end{split}
\qquad i\ge 0.
\end{equation}
We define the shift endomorphism $\tau:H\to H$ as above.
Actually, we need to consider our elements in a completion of the Poisson superalgebra $H$, see~\cite{PeSh18FracPJ}.
Now we define a Poisson superalgebra $\PP=\Poisson(A_0,B_0,C_0)$.
\begin{Lemma}
The Poisson superalgebra $\PP=\Poisson(A_0,B_0,C_0)$ is self-similar with the self-similarity embedding:
$$  \PP\hookrightarrow  H_3\otimes \tau(\PP).$$
\end{Lemma}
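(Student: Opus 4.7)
The plan is to proceed in parallel with the Lie-superalgebra and associative self-similarity lemmas proved earlier in this section. As observed there, it suffices to exhibit a recursive presentation of the generators and then transport it to all of $\PP$ via Poisson operations, using that the shift $\tau : \PP \to \PP$ is an embedding.

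First, I would rewrite~\eqref{aibiciP} as the recursion
\begin{equation*}
A_0 = X_0 + y_0 x_0\, \tau(A_0), \qquad B_0 = Y_0 + z_0 y_0\, \tau(B_0), \qquad C_0 = Z_0 + x_0 z_0\, \tau(C_0),
\end{equation*}
analogous to~\eqref{recursiveQ}. Each right-hand side sits in $H_3 \otimes \tau(\PP)$: the free terms $X_0, Y_0, Z_0$ lie in $H_3 \otimes 1$, and the products $y_0 x_0,\ z_0 y_0,\ x_0 z_0$ belong to $H_3$ while $\tau(A_0),\tau(B_0),\tau(C_0)$ belong to $\tau(\PP)$.

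The structural observation I need is that the full ambient Grassmann Poisson superalgebra splits as
$$H \,\cong\, H_3 \otimes \tau(H)$$
as Poisson superalgebras, with $H_3 = \Lambda[x_0,y_0,z_0,X_0,Y_0,Z_0]$ the Hamiltonian Poisson superalgebra of~\eqref{poissonHn} on the index-$0$ variables and $\tau(H)$ the analogous Poisson superalgebra on the variables of index $\geq 1$. The splitting is legitimate because the only nonzero Poisson brackets of generators, namely $\{X_i,x_i\}=\{Y_i,y_i\}=\{Z_i,z_i\}=1$, pair variables of equal index, and the tensor-product Poisson structure recovers exactly the super Leibnitz rule that governs $H$. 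Under this identification the inclusion $\PP \hookrightarrow H$ restricts, by the recursion, to an embedding $\psi: \PP \hookrightarrow H_3 \otimes \tau(\PP)$ on generators; since $\PP$ is Poisson-generated by $A_0,B_0,C_0$ and $H_3 \otimes \tau(\PP)$ is a Poisson subalgebra of $H_3 \otimes \tau(H) = H$ closed under both the supercommutative product and the bracket, $\psi$ automatically extends to all of $\PP$. Injectivity of $\psi$ is inherited from injectivity of $\PP \hookrightarrow H$.

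The main technical obstacle is the completion issue: the $A_i, B_i, C_i$ are formally infinite series, so the arguments above should be carried out in the same completion of $H$ used in~\cite{PeSh18FracPJ}, and one must verify that the tensor decomposition $H \cong H_3 \otimes \tau(H)$ remains valid at that level (so that the splitting of a series is unambiguous). Once the completion is set up, compatibility of $\psi$ with the supercommutative product and the Poisson bracket reduces to a direct check on the three generators, in close analogy with the Lie and associative cases preceding this lemma.
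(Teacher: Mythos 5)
Your proposal is correct and follows essentially the same route as the paper: the paper's proof likewise consists of the recursion $A_0 = X_0 + y_0x_0\,\tau(A_0)$, etc., together with the observation that $\Poisson(x_0,y_0,z_0,X_0,Y_0,Z_0)\cong H_3$, the extension from generators to all of $\PP$ and the completion caveat being handled exactly as you describe (the latter two points are stated in the surrounding text rather than in the proof itself). Your explicit splitting $H\cong H_3\otimes\tau(H)$ just makes the paper's implicit identification precise.
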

\begin{proof}
By~\eqref{aibiciP} we obtain  a recursive presentation:
\begin{equation}\label{recursiveP}
\begin{split}
A_0 &= X_0 + y_0x_0 \tau(A_0),\\
B_0 &= Y_0 + z_0y_0 \tau(B_0),\\
C_0 &= Z_0 + x_0z_0 \tau(C_0).
\end{split}
\end{equation}
Also, we observe that $\Poisson(x_0,y_0,z_0,X_0,Y_0,Z_0)\cong H_3$, see~\eqref{poissonHn}.
\end{proof}

\begin{Remark}
Consider a "small" Poisson superalgebra related to $\QQ$ defined by~\eqref{defP},
namely, $P(\QQ)=\langle 1\rangle \oplus \QQ$.
This algebra is fractal. But,
it seems that it is not self-similar according to our definition. Namely, there is a problem with a homomorphism
for the associative product.
\end{Remark}

\subsection{Self-similarity and wreath products of Jordan superalgebras}
We have the notion of self-similarity for Lie superalgebras~\eqref{selfQ} (a modification of that of Bartholdi~\cite{Bartholdi15}),
associative superalgebras~\eqref{selfA}, and Poisson superalgebras~\eqref{selfP}.
But we lack a similar notion of self-similarity for Jordan superalgebras.

We start with an observation.
Let $P$ be a Poisson superalgebra and $J=\Kan(P)=P\oplus \bar P$ its Kantor double, which is a Jordan superalgebra. Define a mapping
$$D:J\to J,\qquad
D(a)=0,\quad a\in P,\qquad
D(\bar a)=(-1)^{|a|} a,\quad \bar a\in \bar P.$$
One checks that $D$ is an odd superderivation of $J$ and $D^2=0$.

Assume that we have a self-similar Poisson superalgebra $P$ with an embedding~\eqref{selfP}
$\psi:P\hookrightarrow  H\otimes P_1$, where $H$ is a Poisson superalgebra, and $P_1\cong P$.
Denote the Kantor double $J_1=\Kan(P_1)$.
We apply the Kantor double to both algebras in the embedding $\psi$ above
\begin{align*}
J&=\Kan(P)=P\oplus \bar P \hookrightarrow\Kan(H\otimes P_1)= H\otimes P_1\oplus \overline {H\otimes P_1}\\
&\cong H\otimes (P_1\oplus \bar P_1)\cong H\otimes\Kan(P_1)=H\otimes J_1,
\end{align*}
where the isomorphisms in the last line are that of vector spaces.
Let us express the product $\bullet$ of the last space,
which is identified with the Jordan superalgebra $\Kan(H\otimes P_1)$,
in terms of the Jordan product $\circ$ of $J_1=\Kan(P_1)$
and two products $(\ \cdot\ , \{\ , \ \} )$ of the Poisson superalgebra $H$.
Recall that the product $\bullet$ on $\Kan(H\otimes P_1)$ was defined by Kantor's construction.
Take homogeneous $x,y\in H$ and $a,b\in P_1$.
Consider four cases,
where the unspecified products are the associative supercommutative products of the Poisson superalgebra $H\otimes P_1$
\begin{align*}
x a\bullet yb&= xa yb=(-1)^{|a||y|}(xy) (ab)=(-1)^{|a||y|}(x\cdot y)(a\circ b);\\
x a\bullet y\bar b&= xay\bar b=(-1)^{|a||y|}(xy) (a\bar b)=(-1)^{|a||y|}(x\cdot y)(a\circ \bar b);\\
x \bar a\bullet yb&= (-1)^{|yb|}xay \bar b=(-1)^{|y|+|b|+|a||y|} (xy) (a\bar b)=(-1)^{|y|+|a||y|}(x\cdot y)(\bar a\circ b)\\
  &=(-1)^{|\bar a||y|}(x\cdot y)(\bar a\circ b);\\
x \bar a\bullet y\bar b&=(-1)^{|yb|} \{xa,yb\}=(-1)^{|y|+|b|+|a||y|}(xy\{a,b\} +\{x,y\}ab)\\
&=(-1)^{|y|+|b|+|a||y|}\left ((-1)^{|b|} (x\cdot y) (\bar a\circ \bar b)+ (-1)^{|a|+|b|}\{x,y\} (D(\bar a)\circ D(\bar b))\right )\\
&=(-1)^{|\bar a||y|} \left ((x\cdot y)(\bar a\circ \bar b)+(-1)^{|\bar a|+1} \{x,y\} (D(\bar a)\circ D(\bar b))\right).
\end{align*}
Now let $f,g\in J_1=\Kan(P_1)$, belonging to either $P_1$ or $\bar P_1$ and $x,y\in H$. We combine four cases above:
\begin{equation}\label{new_prod}
(x\otimes f)\bullet (y\otimes g)
=(-1)^{|f||y|} \left (x\cdot y\otimes f\circ g+(-1)^{|f|+1} \{x,y\}\otimes  D(f)\circ D(g) \right).
\end{equation}
By these arguments we have the following statement.

\begin{Lemma}
Let $P$ be a self-similar Poisson superalgebra,
i.e. there exist a Poisson superalgebra $H$, which products being $(\ \cdot\ , \{\ , \ \} )$, and a self-similarity embedding~\eqref{selfP}.
Then the Kantor double $J=\Kan(P)$, which is a Jordan superalgebra with a product $\circ$, enjoys the self-similarity embedding
\begin{equation*}
J\hookrightarrow  H\otimes J,
\end{equation*}
where the right hand side is a Jordan superalgebra that product $\bullet$ satisfies~\eqref{new_prod},
and the superderivative $D:J\to J$ was defined above.
\end{Lemma}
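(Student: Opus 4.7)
The plan is to organize the four-case calculation carried out just before the statement into a short functorial argument. The key observation is that Kantor's doubling construction is functorial on Poisson monomorphisms: given any embedding of Poisson superalgebras $\varphi: P \to Q$, the map $\Kan(\varphi): \Kan(P) \to \Kan(Q)$ defined by $a \mapsto \varphi(a)$ and $\bar a \mapsto \overline{\varphi(a)}$ is an embedding of Jordan superalgebras, because all four defining rules for the Kantor product invoke only the associative product and the Poisson bracket, both of which $\varphi$ respects. Injectivity on the two summands is clear.

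Applied to the given self-similarity embedding $\psi: P \hookrightarrow H \otimes P_1$, this yields a Jordan embedding $\Kan(\psi): J = \Kan(P) \hookrightarrow \Kan(H \otimes P_1)$. It then remains to identify $\Kan(H \otimes P_1)$ with $H \otimes J$ (using $J_1 = \Kan(P_1) \cong J$) as a Jordan superalgebra whose product is given by formula~\eqref{new_prod}. As a $\Z_2$-graded vector space the identification is immediate: $(H \otimes P_1) \oplus \overline{H \otimes P_1}$ maps to $H \otimes (P_1 \oplus \bar P_1) = H \otimes J_1$ via $x \otimes a \mapsto x \otimes a$ and $\overline{x \otimes a} \mapsto x \otimes \bar a$, with parities matching because the bar on the left flips $|x| + |a|$ to $|x| + 1 - |a|$, which is precisely the parity of $x \otimes \bar a$ on the right.

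Transporting Kantor's product across this identification is exactly the four-case computation displayed above the lemma. The first three cases ($xa \bullet yb$, $xa \bullet y\bar b$, $x\bar a \bullet yb$) involve only the associative supercommutative product of the Poisson superalgebra $H \otimes P_1$; after extracting the Kaplansky sign $(-1)^{|a||y|}$, they reproduce only the first summand of~\eqref{new_prod}. The fourth case $x\bar a \bullet y\bar b$ invokes the Poisson bracket on $H \otimes P_1$, which by the super-Leibniz rule for Poisson tensor products splits as $(x \cdot y)\otimes\{a,b\} + \{x,y\}\otimes(ab)$; the first term becomes $(x\cdot y)\otimes(\bar a \circ \bar b)$ directly, while the second becomes $\{x,y\}\otimes D(\bar a)\circ D(\bar b)$ after using $D(\bar a) = (-1)^{|a|} a$ to convert $ab$ into a Jordan product of $D$-images (up to the sign $(-1)^{|\bar a|+1}$ recorded in~\eqref{new_prod}).

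The only genuine difficulty is sign bookkeeping in this last case, where three sources of parity interact: Kaplansky's rule in the tensor product $H \otimes P_1$, Kantor's sign conventions in the bar summand of $\Kan$, and the parity flip $|\bar a| = 1 - |a|$. Once these are correctly tracked, no separate verification of the Jordan superidentity is needed for the product $\bullet$ on $H \otimes J$: it is inherited from $\Kan(H \otimes P_1)$ through the vector space isomorphism, so the composition $J \xrightarrow{\Kan(\psi)} \Kan(H \otimes P_1) \xrightarrow{\sim} H \otimes J$ is the required self-similarity embedding.
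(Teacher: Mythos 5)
Your proposal is correct and follows essentially the same route as the paper: the paper's proof is precisely the chain of identifications $\Kan(P)\hookrightarrow\Kan(H\otimes P_1)\cong H\otimes\Kan(P_1)$ together with the four-case transport of Kantor's product displayed just before the lemma, which yields~\eqref{new_prod}. Your only addition is to package the first step as an explicit functoriality statement for $\Kan$ on grading-preserving Poisson monomorphisms, which the paper leaves implicit in the phrase ``we apply the Kantor double to both algebras in the embedding.''
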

\begin{Corollary}
Consider the Poisson superalgebra $\PP=\Poisson(A_0,B_0,C_0)$
related to $\QQ$ as above. Let $\JJ=\Kan(\PP)$ be its Kantor double.
Then the Jordan superalgebra $\JJ$ has a  self-similarity embedding
\begin{equation*}
\JJ\hookrightarrow  H_3\otimes \JJ.
\end{equation*}
\end{Corollary}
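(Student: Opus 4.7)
The proof is essentially an immediate application of the preceding Lemma to the self-similar Poisson superalgebra $\PP$. The plan is to verify the three ingredients required by the Lemma: $\PP$ is self-similar (already proved), the ambient algebra $H$ can be taken to be $H_3$, and the "inner copy" $\tau(\PP)$ can be identified with $\PP$ itself.

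First, recall from the self-similarity Lemma for Poisson superalgebras that $\PP = \Poisson(A_0,B_0,C_0)$ admits the embedding
\begin{equation*}
\PP \hookrightarrow H_3 \otimes \tau(\PP),
\end{equation*}
where $H_3 \cong \Poisson(x_0,y_0,z_0,X_0,Y_0,Z_0)$ is the Hamiltonian Poisson superalgebra in six variables. The shift $\tau$ is a monomorphism of Poisson superalgebras, and the subalgebra $\tau(\PP) = \Poisson(A_1,B_1,C_1)$ is generated by the shifted pivot elements, hence is isomorphic to $\PP$ via $\tau$. Composing the embedding above with $\mathrm{id}_{H_3}\otimes \tau^{-1}$ produces a self-similarity embedding of the form $\PP \hookrightarrow H_3 \otimes \PP$ in the precise sense of \eqref{selfP}.

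Second, I would apply the preceding Lemma verbatim with $P = \PP$ and $H = H_3$. That Lemma asserts that the Kantor double inherits a self-similarity embedding
\begin{equation*}
\Kan(\PP) \hookrightarrow H_3 \otimes \Kan(\PP),
\end{equation*}
where the right-hand side is equipped with the Jordan product $\bullet$ defined by formula~\eqref{new_prod} in terms of the two operations of $H_3$, the Jordan product $\circ$ of $\Kan(\PP)$, and the odd superderivation $D$. Since $\JJ = \Kan(\PP)$ by definition, this is exactly the desired embedding.

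The only point that requires a moment's attention is the identification $\tau(\PP)\cong\PP$ needed to pass from the "shifted" embedding to the "plain" embedding required as input to the preceding Lemma; this is routine since $\tau$ is a Poisson embedding and restricts to an isomorphism $\PP \xrightarrow{\sim} \tau(\PP)$. No genuine obstacle arises, since once the input self-similarity of $\PP$ is normalized, the Jordan-side conclusion is a formal consequence of Kantor's construction and the explicit product~\eqref{new_prod}.
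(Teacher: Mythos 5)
Your proposal is correct and follows exactly the route the paper intends: the Corollary is an immediate application of the preceding Lemma on Kantor doubles of self-similar Poisson superalgebras, combined with the earlier Lemma giving $\PP\hookrightarrow H_3\otimes\tau(\PP)$ and the identification $\tau(\PP)\cong\PP$ via the shift monomorphism. Your explicit attention to normalizing $\tau(\PP)$ to $\PP$ is a reasonable extra care that the paper leaves implicit.
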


The notion of the {\it wreath product} plays an important role in group theory~\cite{KalKra51,Nekr05}.
Analogous notion of a {\it wreath product} was defined for arbitrary two Lie algebras~\cite{PeRaSh}, see also~\cite{Bartholdi15,FutKochSid}.
Similarly, the notion of the wreath products of associative algebras has many applications, see a recent paper~\cite{AlAlJaZe}.
The observations above allow us to suggest that there exists a notion of
a {\it wreath product} of a Jordan superalgebra with a Poisson superalgebra as follows.
\begin{Conjecture}
Let $J_1$ be a Jordan superalgebra with the product $\circ$, $D:J_1\to J_1$ an odd superderivative such that $D^2=0$.
Let $H$  be a Poisson superalgebra with products $(\ \cdot,\ \{\ ,\ \} )$.
Supply $J=H\otimes J_1$ with the product~\eqref{new_prod}.
Is it true that $J$ is a Jordan superalgebra?
In this case, $J$ should be called the {\em wreath product} of $J_1$ with $H$.
\end{Conjecture}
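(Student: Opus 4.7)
The plan is to verify the two defining axioms of a Jordan superalgebra for $J = H \otimes J_1$ equipped with the product~\eqref{new_prod}: supercommutativity and the super Jordan identity. Throughout I write $a_i = x_i \otimes f_i$ with $|a_i| = |x_i| + |f_i|$, and use that $D$ being odd gives $|D(f)| = |f| + 1$.

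Supercommutativity $a_1 \bullet a_2 = (-1)^{|a_1||a_2|} a_2 \bullet a_1$ is a direct Koszul-sign check. The $\cdot$-piece of the left-hand side equals $(-1)^{|f_1||x_2|} (x_1 \cdot x_2) \otimes (f_1 \circ f_2)$; on the right, supercommutativity of $\cdot$ in $H$ and of $\circ$ in $J_1$ each contribute their own sign, and a short calculation confirms that these, together with $(-1)^{|a_1||a_2|}$, reproduce the left-hand sign. The $\{,\}$-piece is analogous: the prefactor $(-1)^{|f_1|+1}$ in~\eqref{new_prod} combines with the super-anticommutativity of $\{,\}$ (an extra $-1$) and with the shift $|D(f_i)| = |f_i|+1$ in reordering $D(f_1) \circ D(f_2)$, and one checks directly that everything cancels.

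The real work lies in the super Jordan identity on four homogeneous arguments $a_1, a_2, a_3, a_4$. Each of its six trilinear terms, expanded by~\eqref{new_prod}, splits into $2^3 = 8$ summands according to whether each of the three nested $\bullet$-operations contributes its $(\cdot,\circ)$-piece or its $(\{,\}, D\,\circ\,D)$-piece. I would sort the resulting $48$ summands on each side by the number $k \in \{0,1,2,3\}$ of $\{,\}$-choices and match them branch by branch. The $k=0$ branch is the Jordan identity for $\circ$ on $J_1$ tensored with the associative-supercommutative product in $H$, hence vanishes. Each $k=1$ branch should collapse via the super-Leibniz rule in $H$, combined with the derivation rule $D(f \circ g) = D(f) \circ g + (-1)^{|f|} f \circ D(g)$ that comes into play whenever a $\{,\}$ sits at an outer product and must be distributed through an inner $\circ$. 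The $k=2$ branches should cancel by super-Jacobi in $H$ together with $D^2 = 0$, the latter killing sub-terms where an outer $D$ would land on an already $D$-ed factor. In the $k=3$ branches all four of $f_1,\dots,f_4$ carry at least one $D$; any further $D$ forced by an outer $\{,\}$ then stacks and vanishes, leaving a pure triple-$\{,\}$ expression in $H$ that is killed by super-Jacobi.

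The main obstacle, as is typical in super-computations of this kind, will be the sign bookkeeping: each of the $96$ sub-terms carries a product of Koszul signs coming from~\eqref{new_prod}, from the sign prefactors in the super Jordan identity, and from commuting $D$ past homogeneous elements, and these must align exactly with the signs dictated by super-Leibniz, super-Jacobi, and the Jordan identity for $\circ$ within each branch. Two sanity checks will guide and verify the calculation: when $\{,\} \equiv 0$, only the $k=0$ branch survives and the conjecture reduces to the known fact that the tensor product of a supercommutative superalgebra with a Jordan superalgebra is Jordan; when $J_1 = \Kan(P_1)$ with $D$ the canonical odd square-zero derivation of the Kantor double, the statement is exactly the lemma proved just above together with Kantor's theorem. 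If the four-case analysis proves unwieldy, a natural fallback is to try to realize every pair $(J_1, D)$ with $D$ odd and $D^2=0$ universally as a Kantor double $\Kan(\ker D)$ equipped with a suitably defined Poisson bracket on $\ker D$, which would reduce the conjecture to that already-handled case.
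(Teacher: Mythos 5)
The statement you are addressing is posed in the paper as an open \emph{Conjecture}; the paper contains no proof of it. The only evidence the authors supply is the special case $J_1=\Kan(P_1)$ with $D$ the canonical odd square-zero superderivation of the Kantor double: there the product~\eqref{new_prod} is \emph{derived} from the identification $\Kan(H\otimes P_1)\cong H\otimes\Kan(P_1)$, and the Jordan property comes for free from Kantor's theorem applied to the Poisson superalgebra $H\otimes P_1$. Your second ``sanity check'' is precisely this computation, so it confirms nothing beyond what the paper already knows.

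Your proposal itself is a plan rather than a proof, and the gap sits exactly where the conjecture is hard. The $k=0$ branch (reduction to the super tensor product with a supercommutative algebra) and the degenerate case $\{\,,\,\}\equiv 0$ are fine. But the $k=1$ and $k=2$ cancellations are asserted, not verified, and they are not automatic: when an outer Poisson bracket is distributed by super-Leibniz in $H$, the matching term on the $J_1$ side requires distributing $D$ through a nested product such as $D(f_1)\circ D(f_2\circ f_3)$ versus $D\bigl((f_1\circ f_2)\bigr)\circ D(f_3)$ appearing in \emph{differently associated} terms of the linearized super Jordan identity. Since $\circ$ is not associative, there is no a priori term-by-term pairing between the Leibniz expansion in $H$ and the $D$-derivation expansion in $J_1$; whether the totality cancels for an \emph{arbitrary} pair $(J_1,D)$ is precisely the open question. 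Your fallback --- realizing every $(J_1,D)$ with $D$ odd and $D^2=0$ as $\Kan(\ker D)$ for a suitable Poisson structure --- also fails in general: $\ker D$ need not be a unital supercommutative associative subalgebra, $J_1$ need not split as $\ker D\oplus\overline{\ker D}$ with Kantor's multiplication table, and already $D=0$ with $J_1$ not of the form $\Kan(P)$ gives a counterexample to the proposed reduction (harmless there, but indicative that the reduction is not universal). To settle the conjecture you would need either to carry out the full $96$-term sign verification, or to exhibit a concrete $(H,J_1,D)$ for which a specialization of the super Jordan identity fails; neither is done here.
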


On the other hand consider a "small" Jordan superalgebra related to the Lie superalgebra $\QQ$ above.
Namely, by~\eqref{defJor} set $\KK=\Jor(\QQ)=\langle 1\rangle \oplus \QQ\oplus \langle \bar 1\rangle \oplus \bar \QQ$.
Then it seems that $\KK$ does not have a self-similarity embedding.
On the other hand, $\KK$ is fractal.

%


\end{document}